\theoremstyle{plain}
\newtheorem{theorem} {Theorem} [section]
\newtheorem{lemma} [theorem]{Lemma}
\newtheorem{proposition}[theorem]{Proposition}
\newtheorem{corollary} [theorem]{Corollary}
\newtheorem{question}[theorem]{Question}
\theoremstyle{definition}
\newtheorem{remark}[theorem]{Remark}
\numberwithin{equation}{section}
\title[Weighted $L^2$ Holomorphic functions on ball-fiber bundles ]{Weighted $L^2$ Holomorphic functions on ball-fiber bundles over compact K\"ahler manifolds}
\date\today
\author{Seungjae Lee and Aeryeong Seo}
\address{Department of Mathematics,
Kyungpook National University,
Daegu 41566, Republic of Korea}%
\email{aeryeong.seo@knu.ac.kr}
\address{Center for Complex Geometry, Institute for Basic Science, Daejeon, 34126, Republic of Korea}
\email{seungjae@ibs.re.kr}
\subjclass[2010]{Primary 32A36,
Secondary 32A05, 32W05, 32Q05, 32L10.}%
\keywords{compact submanifold in complex hyperbolic space forms, $L^2$ holomorphic functions, holomorphic fiber bundles, $\bar\partial$-equations}
\begin{document}
\maketitle
\begin{abstract}
Let $\widetilde{M}$ be a complex manifold, $\Gamma$ be a torsion-free cocompact lattice of $\text{Aut}(\widetilde{M})$ and $\rho\colon\Gamma\to SU(N,1)$ be a representation.
Suppose that there exists a $\rho$-equivariant totally geodesic isometric holomorphic { embedding} $\imath\colon \widetilde M\to\mathbb B^N$. Let
$M:=\widetilde M/\Gamma$ and $\Sigma:=\mathbb B^N/\rho(\Gamma)$.
In this paper, we investigate a relation between weighted $L^2$ holomorphic functions on the fiber bundle $\Omega:=M\times_\rho\mathbb B^N$ and the holomorphic sections of the pull-back bundle $\imath^*(S^mT^*_\Sigma)$ over $M$.  
In particular, $A^2_\alpha(\Omega)$ has infinite dimension for any $\alpha>-1$ and if $n<N$, then $A^2_{-1}(\Omega)$ also has the same property.
As an application, if $\Gamma$ is a torsion-free cocompact lattice in $SU(n,1)$, $n\geq 2$, and $\rho\colon \Gamma\to SU(N,1)$ is a maximal representation, then for any $\alpha>-1$, $A^2_\alpha(\mathbb B^n\times_{\rho} \mathbb B^N)$ has infinite dimension. If $n<N$, then $A_{-1}^2(\mathbb B^n\times_{\rho} \mathbb B^N)$ also has the same property.
\end{abstract}

\section{Introduction}
For a complex manifold $X$, denote by $\text{Aut(X)}$ the set of holomorphic diffeomorphisms of $X$ onto itself and denote by $S^m T^*_X$ the $m$-th symmetric power of the holomorphic cotangent bundle of $X$.
For a holomorphic fiber bundle $E\to X$ over $X$ and a holomorphic map $f\colon Y\to X$,
we denote by $f^*(E)$ the pull-back bundle of $E$ over $Y$ by $f$. 
Let $\mathbb B^n := \{z\in \mathbb C^n : |z|<1\}$ be the $n$-dimensional unit ball.
For a lattice $\Gamma$ in $\textup{Aut}(X)$ and a homomorphism $\rho\colon\Gamma\to\textup{Aut}(\mathbb B^N)$, we say that a map $f\colon X\to\mathbb B^N$ is $\rho$-equivariant if for any $\gamma\in\Gamma$ and $z\in X$, $f(\gamma z) = \rho(\gamma)f(z)$ holds.

Our primary result of this paper is
\begin{theorem}\label{main}
Let $\widetilde{M}$ be a complex manifold, $\Gamma$ be a torsion-free cocompact lattice of $\text{Aut}(\widetilde{M})$ and $\rho\colon\Gamma\to SU(N,1)$ be a representation.
Suppose that there exists a $\rho$-equivariant totally geodesic isometric holomorphic embedding $\imath\colon \widetilde M\to\mathbb B^N$. Let
$M:=\widetilde M/\Gamma$ and $\Sigma:=\mathbb B^N/\rho(\Gamma)$.
Let $\Omega:=M\times_\rho \mathbb B^N$ be a holomorphic $\mathbb B^N$-fiber bundle over $M$ where any $\gamma\in \Gamma$ acts on $\widetilde M\times \mathbb B^N$ by $(\zeta,w)\mapsto (\gamma \zeta, \rho(\gamma) w)$. Then there exists an injective linear map
\begin{equation}\nonumber
\Phi: \bigoplus_{m=0}^{\infty} H^0 (M, \imath^* (S^m T_{\Sigma}^*)) \rightarrow
\begin{cases}
\displaystyle\bigcap_{\alpha>-1} A^2_\alpha (\Omega) \subset \mathcal{O} (\Omega) & \text{ if } n=N,\\
\displaystyle\bigcap_{\alpha\geq -1} A^2_\alpha (\Omega) \subset \mathcal{O} (\Omega)  & \text{ if } n< N,\\
\end{cases}
\end{equation}
which has a dense image in $\mathcal{O}(\Omega)$ equipped with the compact open topology. In particular, $\dim A^2_{\alpha} (\Omega) = \infty$ if $\alpha> -1$ and $A_{-1}^2 (\Omega) = \bigcap_{\alpha \geq -1} A_\alpha^2(\Omega)$ with $\dim A^2_{-1} (\Omega) = \infty$ if $n < N$.
\end{theorem}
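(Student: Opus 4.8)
\emph{Overall strategy.} The plan is to pass to the universal cover, present $\Phi$ as a right inverse of a ``fibrewise Taylor expansion along a canonical section'', and produce the $L^2$-bounds by a $\bar\partial$-argument whose inhomogeneous datum lives in a relatively compact part of $\Omega$. Lift everything to $\widetilde M\times\mathbb B^N$: holomorphic functions (resp. weighted $L^2$ holomorphic functions) on $\Omega$ correspond to $\Gamma$-invariant holomorphic functions (resp. with finite weighted norm) on $\widetilde M\times\mathbb B^N$, and — since $\Sigma$ and $\mathbb B^N$ share the universal cover $\mathbb B^N$ — $H^0(M,\imath^{-1}(S^mT^*_\Sigma))$ corresponds to the holomorphic $\rho$-equivariant sections over $\widetilde M$ of the pull-back of $S^mT^*_{\mathbb B^N}$ along $\imath$. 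By $\rho$-equivariance the assignment $\zeta\mapsto(\zeta,\imath(\zeta))$ descends to a holomorphic section $\sigma\colon M\to\Omega$ whose normal bundle is canonically $\imath^{-1}(T_\Sigma)$, so the fibrewise Taylor expansion along $\sigma(M)$ gives a continuous linear \emph{jet map}
\[ J\colon\mathcal O(\Omega)\longrightarrow\textstyle\prod_{m\ge 0}H^0(M,\imath^{-1}(S^mT^*_\Sigma)), \]
injective because each fibre is connected, meets $\sigma(M)$, and a holomorphic function vanishing to infinite order at a point vanishes nearby. Finally, because $\imath$ is an embedding with $M$ compact, $\imath(\widetilde M)$ is complete in the Bergman metric of $\mathbb B^N$, hence closed in $\mathbb B^N$, so $\widetilde M$ is Stein and $\widetilde M\times\mathbb B^N$ is Stein — this opens the door to the $\bar\partial$-machinery on $\Omega$.

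\emph{Construction of $\Phi$.} Given $s\in H^0(M,\imath^{-1}(S^mT^*_\Sigma))$ of pure degree $m$, I would first build a smooth function $\widetilde F_0$ on $\Omega$, supported in a relatively compact tube about $\sigma(M)$ and equal, near $\sigma(M)$, to the tautological degree-$m$ homogeneous function attached to $s$ in holomorphic normal-bundle coordinates; making this local model descend to $\Omega$ is where the transformation law $1-\langle gv,gw\rangle=(1-\langle v,w\rangle)\bigl(\overline{J_g(w)}\,J_g(v)\bigr)^{1/(N+1)}$, $g\in\mathrm{Aut}(\mathbb B^N)$, is used to absorb the $\Gamma$-cocycle. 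Then $\bar\partial\widetilde F_0$ is smooth, $\bar\partial$-closed, and supported in a compact $K_0\Subset\Omega$ avoiding \emph{both} $\sigma(M)$ and the fibre boundary. Solving $\bar\partial u=\bar\partial\widetilde F_0$ on $\Omega$ by a weighted Hörmander–Andreotti–Vesentini estimate, with a weight chosen to be strongly singular along $\sigma(M)$ (its $i\partial\bar\partial$ stays $\ge 0$ there since $\log(\mathrm{dist}^2\text{ to }\sigma(M))$ is plurisubharmonic, and the singularity forces $u$ to vanish along $\sigma(M)$ to order $>m$) and comparable near the fibre boundary to the $A^2_\alpha$-weights, I set $\Phi(s):=\widetilde F_0-u$. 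Then $\Phi(s)\in\mathcal O(\Omega)$; its jet along $\sigma(M)$ equals $s$ in degree $m$ and $0$ in degrees $<m$, so $J\circ\Phi$ is ``upper triangular'' over the grading; and since $\bar\partial\widetilde F_0$ is supported in $K_0$, where every admissible weight is bounded, the estimate places $\Phi(s)$ in $A^2_\alpha(\Omega)$ for all $\alpha>-1$, and in $A^2_{-1}(\Omega)$ when $n<N$.

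\emph{Injectivity, density, and the dimension count.} Injectivity of $\Phi$ is immediate from the upper-triangular form of $J\circ\Phi$. For density of $\mathrm{im}\,\Phi$ in $\mathcal O(\Omega)$, fix $\alpha$ and filter the Hilbert space $A^2_\alpha(\Omega)$ by $\mathcal F^k=\{F:J(F)=0\text{ in degrees }<k\}$; the jet map gives $\mathcal F^k/\mathcal F^{k+1}\cong H^0(M,\imath^{-1}(S^kT^*_\Sigma))$ (surjectivity being exactly the construction above) and $\bigcap_k\mathcal F^k=0$, so $A^2_\alpha(\Omega)=\overline{\bigoplus_k(\mathcal F^k\ominus\mathcal F^{k+1})}$; choosing $\Phi$ as the minimal-norm right inverse makes $\Phi$ identify $H^0(M,\imath^{-1}(S^mT^*_\Sigma))$ with $\mathcal F^m\ominus\mathcal F^{m+1}$, whence $\overline{\mathrm{im}\,\Phi}=A^2_\alpha(\Omega)$, and this is dense in $\mathcal O(\Omega)$ by the usual weighted $\bar\partial$-approximation (available since $\Omega$ is fibred over the compact $M$ by the pseudoconvex fibres $\mathbb B^N$). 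Infinite-dimensionality then follows from $\imath^{-1}(T^*_\Sigma)$ carrying the positively curved dual Bergman metric of $T^*_{\mathbb B^N}$, so that $S^m\imath^{-1}(T^*_\Sigma)$ has unboundedly many sections as $m\to\infty$ (asymptotic Riemann–Roch / $L^2$-estimates on $M$); and $A^2_{-1}(\Omega)=\bigcap_{\alpha\ge-1}A^2_\alpha(\Omega)$ is just the monotonicity $A^2_{\alpha_1}\subseteq A^2_{\alpha_2}$ for $\alpha_1<\alpha_2$.

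\emph{Main obstacle.} The weight $(1-\mathrm{dist}^2_{\mathrm{fibre}})^{\alpha}$ is plurisuperharmonic for $\alpha\le 0$, so near the fibre boundary the curvature positivity required in the $\bar\partial$-estimate cannot be supplied by the weight and must be extracted from the geometry of $\Omega$ itself; at the endpoint $\alpha=-1$ this forces $\Phi(s)$ to actually vanish at the fibre boundary in the $A^2_{-1}$-sense, and the positivity making that possible is exactly the contribution of the $N-n>0$ directions normal to $\imath$ — which is why the hypothesis $n<N$ enters. Carrying this out (together with verifying the $\Gamma$-compatibility of the local model via the explicit automorphy factor of $\mathrm{Aut}(\mathbb B^N)$) is the technical heart of the proof.
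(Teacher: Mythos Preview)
Your strategy is genuinely different from the paper's. The paper never runs a H\"ormander estimate on $\Omega$; everything happens on the compact base $M$. Given $\psi\in H^0(M,\imath^{-1}(S^{m_0}T^*_\Sigma))$, the paper builds a sequence $\varphi_k\in C^\infty(M,\imath^{-1}(S^kT^*_\Sigma))$ with $\varphi_{m_0}=\psi$ by recursively taking the minimal solution of $\bar\partial_M\varphi_k=-(k-1)\,\mathcal R_G\varphi_{k-1}$ on $M$, where $\mathcal R_G$ is a ``raising operator'' coming from the Bergman metric. A Hodge-type identity $\{\Box_M,\mathcal R_G\}=2m\,\mathcal R_G$ in degree $m$ forces each $\varphi_k$ into a specific $\Box_M$-eigenspace and yields an \emph{exact} formula for $\|\varphi_{m_0+m}\|^2$. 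Then $\Phi(\psi)$ is the explicit series $\sum_I f_I(\imath(\zeta))(T_{\imath(\zeta)}w)^I$ with the $f_I$ read off from the $\varphi_k$ in an orthonormal frame; its $A^2_\alpha$-norm is computed in closed form (Lemma~\ref{norm of f}) and shown to converge for $\alpha>n-(N+1)$ by Raabe's test. So the $L^2$-membership is a series estimate, not a $\bar\partial$-bound on $\Omega$; this is exactly why a \emph{single} $\Phi$, defined independently of $\alpha$, lands in $\bigcap_\alpha A^2_\alpha$, and why the endpoint $\alpha=-1$ is accessible. Density is then obtained not by a filtration/minimal-norm argument but by projecting the associated differentials of an arbitrary $f\in\mathcal O(\Omega)$ onto those $\Box_M$-eigenspaces and recognising the projections as the sequence produced by $\Phi$.

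Your outline has real gaps. First, you flag but do not resolve the ``main obstacle'': to place $\Phi(s)$ in every $A^2_\alpha(\Omega)$ from a single $\bar\partial$-solve you need a weight on $(\Omega,\omega)$ dominating all the $A^2_\alpha$-weights and still satisfying the curvature inequality; this requires actually computing $\mathrm{Ric}(\omega)$ and the Levi form of $-\log\delta$ for the mixed metric, which you do not do. Second, your density argument chooses $\Phi$ as the minimal-$A^2_\alpha$-norm right inverse, so the resulting $\Phi$ depends on $\alpha$; the theorem demands one $\alpha$-independent $\Phi$ whose image is dense in $\mathcal O(\Omega)$, and your filtration argument does not deliver that. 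Third, and most seriously for the endpoint: the paper's $A^2_{-1}(\Omega)$ is \emph{not} a weighted $L^2$-space but a Hardy-type space defined by the coefficient norm $\|f\|^2_{-1}=\frac{\pi^N}{N!}\sum_{|I|}\|\varphi_{|I|}\|^2\frac{|I|!\,\Gamma(N)}{\Gamma(N+|I|)}$ via the associated differentials; there is no measure on $\Omega$ realising this, so a H\"ormander solution ``in $A^2_{-1}$'' has no direct meaning. The paper gets $\Phi(\psi)\in A^2_{-1}(\Omega)$ for $n<N$ straight from the explicit norm formula, and the inclusion $A^2_{-1}\subset A^2_\alpha$ (hence the claimed equality with $\bigcap_{\alpha\ge -1}A^2_\alpha$) is a termwise comparison of Gamma-quotients (Lemma~\ref{Hardy}), not a consequence of any monotonicity of weights.
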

We remark that under the condition of Theorem~\ref{main}, $\rho(\Gamma)$ acts on $\mathbb B^N$ properly discontinuously and hence $\Sigma$ is a complex manifold.
Let $\Gamma\subset SU(n,1)$ be a cocompact lattice and $\rho\colon \Gamma\to SU(N,1)$ be a homomorphism.
Denote by $\omega_n$ and $\omega_N$ the K\"ahler forms of the Bergman metrics of $\mathbb B^n$ and $\mathbb B^N$ respectively.
Let $f\colon \mathbb B^n\to \mathbb B^N$ be any smooth $\rho$-equivariant map and $[\rho^* \omega_N]:=[f^* \omega_N]\subset H^2_{dR}(\mathbb B^n/\Gamma)$ be the de Rham class of $f^*\omega_N$ which only depends on $\rho$.
The Toledo invariant $\tau(\rho)$ of $\rho$ is defined by
$$\tau(\rho):=
\frac{1}{n!}\int_{\mathbb B^n/\Gamma} \rho^* \omega_N\wedge \omega^{n-1}_{\mathbb B^n/\Gamma}
$$
and it satisfies the Milnor-Wood inequality
\begin{equation}\label{Toledo invariant}
|\tau(\rho)|\leq \text{Vol}(\mathbb B^n/\Gamma)
\end{equation}
under suitable normalizations of the metrics.
One says that $\rho$ is a {\it maximal representation} if the equality holds in \eqref{Toledo invariant}. In \cite{Corlette}, Corlette showed that if $\rho$ is a maximal representation with $n\geq 2$, then there exists a totally geodesic holomorphic $\rho$-equivariant embedding of $\mathbb B^n$ into $\mathbb B^N$. By Theorem~\ref{main}, we have
\begin{corollary}
Let $\Gamma\subset SU(n,1)$, $n\geq 2$, be a cocompact lattice and $\rho\colon \Gamma\to SU(N,1)$ be a maximal representation.
Let $\Omega:= M\times_\rho \mathbb B^N$ be a holomorphic $\mathbb B^N$-fiber bundle over a complex hyperbolic space form $M:=\mathbb B^n/\Gamma$. Then for each $\alpha>-1$, the dimension of $A_\alpha^2(\Omega)$ is infinite and $\bigcap_{\alpha>-1} A_\alpha^2(\Omega)$ is dense in $\mathcal O(\Omega)$ equipped with the compact open topology. Moreover if $n<N$, then the dimension of $A_\alpha^2(\Omega)$ is infinite for each $\alpha\geq-1$ and $A^2_{-1} (\Omega) = \bigcap_{\alpha\geq -1} A_\alpha^2(\Omega)$ is dense in $\mathcal O(\Omega)$ equipped with the compact open topology.
\end{corollary}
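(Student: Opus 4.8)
The plan is to obtain the corollary as a direct application of Theorem \ref{main}, the only extra ingredient being Corlette's rigidity theorem quoted above. First I would put $\widetilde M := \mathbb B^n$, so that $\mathrm{Aut}(\widetilde M)$ is the projective unitary group $PU(n,1)$. The cocompact lattice $\Gamma\subset SU(n,1)$ maps to $PU(n,1)=\mathrm{Aut}(\mathbb B^n)$ with finite (central) kernel, hence its image is again a cocompact lattice; replacing $\Gamma$ by a finite-index subgroup by Selberg's lemma if necessary (equivalently, reading the statement for torsion-free $\Gamma$ as in the abstract), I may assume $\Gamma$ is a torsion-free cocompact lattice in $\mathrm{Aut}(\mathbb B^n)$ and that $M=\mathbb B^n/\Gamma$ is a compact complex hyperbolic space form. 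Post-composing $\rho$ with $SU(N,1)\hookrightarrow \mathrm{Aut}(\mathbb B^N)$ makes it a homomorphism $\rho\colon\Gamma\to\mathrm{Aut}(\mathbb B^N)$.

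Next I would invoke Corlette's theorem \cite{Corlette}: since $n\geq 2$ and $\rho$ is maximal, there exists a totally geodesic \emph{holomorphic} $\rho$-equivariant embedding $\imath\colon\mathbb B^n\to\mathbb B^N$, that is $\imath(\gamma\zeta)=\rho(\gamma)\imath(\zeta)$ for every $\gamma\in\Gamma$. With this $\imath$ the hypotheses of Theorem \ref{main} are satisfied verbatim for $\Omega:=M\times_\rho\mathbb B^N$ and $\Sigma:=\mathbb B^N/\rho(\Gamma)$.

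Then I would simply quote the conclusion of Theorem \ref{main}. It produces an injective linear map $\Phi$ from $\bigoplus_{m\geq 0}H^0(M,\imath^{-1}(S^mT^*_\Sigma))$ into $\bigcap_{\alpha>-1}A^2_\alpha(\Omega)$, and into $\bigcap_{\alpha\geq-1}A^2_\alpha(\Omega)$ when $n<N$, whose image is dense in $\mathcal O(\Omega)$ for the compact--open topology; in the range $n<N$ it moreover gives $A^2_{-1}(\Omega)=\bigcap_{\alpha\geq-1}A^2_\alpha(\Omega)$ with $\dim A^2_{-1}(\Omega)=\infty$. Since a finite-dimensional linear subspace of the Fréchet space $\mathcal O(\Omega)$ is closed, the dense subspace $\Phi\bigl(\bigoplus_m H^0(\cdots)\bigr)$ must be infinite-dimensional, and it is contained in each of the relevant spaces $A^2_\alpha(\Omega)$; this yields all the asserted infinite-dimensionality and density statements at once.

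I do not anticipate a genuine obstacle: the analytic heart of the matter is entirely contained in Theorem \ref{main}, and the passage from ``maximal representation'' to ``totally geodesic holomorphic equivariant embedding'' is exactly Corlette's theorem. The only points that deserve a sentence of care are the descent from $SU(n,1)$ to $\mathrm{Aut}(\mathbb B^n)$ and the torsion-freeness hypothesis. If one wanted an independent check that $\bigoplus_m H^0(M,\imath^{-1}(S^mT^*_\Sigma))$ is infinite-dimensional (which Theorem \ref{main} already implies), one could note that totally geodesicity splits $\imath^{-1}T^*_\Sigma\cong T^*_M\oplus N^*_{M/\Sigma}$, exhibiting $S^mT^*_M$ as a direct summand of $\imath^{-1}(S^mT^*_\Sigma)$, and that the cotangent bundle of a compact ball quotient is ample, so that $h^0(M,S^mT^*_M)\to\infty$ as $m\to\infty$.
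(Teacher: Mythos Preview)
Your proposal is correct and follows exactly the paper's own route: the corollary is stated immediately after the sentence ``In \cite{Corlette}, Corlette showed that if $\rho$ is a maximal representation with $n\geq 2$, then there exists a totally geodesic holomorphic $\rho$-equivariant embedding of $\mathbb B^n$ into $\mathbb B^N$. By Theorem \ref{main}, we have'', with no further argument. Your added remarks about the descent $SU(n,1)\to\mathrm{Aut}(\mathbb B^n)$ and torsion-freeness, and the optional direct check that $\bigoplus_m H^0$ is infinite-dimensional, are harmless elaborations of points the paper leaves implicit.
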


Theorem \ref{main} is motivated by the following question.
\begin{question}
Does any unit ball fiber bundle over a compact K\"ahler manifold
admit nonconstant weighted $L^p$ ($1\leq p\leq \infty$) holomorphic functions?
\end{question}
Remark that any $\mathbb B^N$-fiber bundle over a compact K\"ahler manifold admits $C^\infty$ plurisubharmonic exhaustion function if either $n=1$ (\cite{Diederich_Ohsawa}) or $n\geq 2$ and $\rho$ is reductive (\cite{Seo}).

Whether there exist holomorphic functions is a fundamental property of complex manifolds.
If the complex manifold can be embedded holomorphically into a complex Euclidean space, then there exist a lot of holomorphic functions.
A far deeper theorem is proved by Siu-Yau \cite{Siu_Yau} and Greene-Wu \cite{Greene_Wu} that a complete simply connected nonpositively curved K\"ahler manifold of faster than quadratic sectional curvature decay is biholomorphic to $\mathbb C^n$ by constructing $n$ holomorphic functions. In \cite{Bland}, Bland presented two sufficient conditions given for a complete K\"ahler manifold of nonpositive sectional curvature to admit nonconstant bounded holomorphic functions.

On the other hand, if the manifold is compact, then there exist no nonconstant holomorphic functions.
In \cite{Greene_Wu}, Greene-Wu showed that any complete simply connected K\"ahler manifold with nonpositive sectional curvature does not admit nonzero $L^p$ $(1\leq p<\infty)$ holomorphic functions. Moreover if the curvature tensor has certain stronger conditions, then it does not have any nonconstant bounded holomorphic function neither.

Let $\Delta:=\{z\in \mathbb C : |z|<1\}$ be the unit disc and $\Gamma\subset \text{Aut}(\Delta)$ be a cocompact torsion-free lattice. Then Ohsawa observed that there exists a holomorphic function induced from the Poincar\'e series given by
$$\sum_{\gamma\in\Gamma} \left( \gamma(z)-\gamma(w)\right)^k$$
with any $k\geq 2$ on the disc bundle $\Delta\times\Delta /\Gamma$ over the Riemann surface $\Delta / \Gamma$ where $\Gamma$ acts on $\Delta\times\Delta$ by $\gamma(z,w)=(\gamma z, \gamma w)$.
In \cite{Adachi} Adachi gave a concrete description of $\mathcal O(\Delta\times\Delta/\Gamma)$, and proved that there exist weighted $L^2$ holomorphic functions on $\Delta\times\Delta / \Gamma$ out of holomorphic sections of $K^\ell$ with $\ell\in \mathbb N$ where $K$ denotes the canonical line bundle over $\Delta / \Gamma$.
In \cite{Lee_Seo} the authors generalized his method to the $\mathbb{B}^n$-fiber bundle $\mathbb B^n\times \mathbb B^n/\Gamma$ over a complex hyperbolic space form $\mathbb B^n / \Gamma$.

Now let $\widetilde M$ be a complex manifold and $\Gamma$ be a torsion-free cocompact lattice of $\text{Aut}(\widetilde M)$ and $\rho\colon\Gamma\to \text{Aut}(\mathbb B^N)$ be a homomorphism. Suppose that there exists a $\rho$-equivariant holomorphic embedding $\imath \colon \widetilde M\to\mathbb B^N$, i.e. for any $\gamma\in\Gamma$, $\imath (\gamma \zeta) = \rho(\gamma) \imath(\zeta)$.
Then the $\mathbb B^N$-fiber bundle $M\times_\rho\mathbb B^N:= \widetilde M\times \mathbb B^N/\Gamma$ over $M:=\widetilde M/\Gamma$ where any $\gamma\in \Gamma$ acts on $\widetilde M\times \mathbb B^N$ by $(\zeta,w)\mapsto (\gamma \zeta, \rho(\gamma) w)$ has a holomorphic function which is induced from the following Poincar\'e series
$$
\sum_{\gamma\in \Gamma} \left( \rho(\gamma)_j(\imath (\zeta)) - \rho(\gamma)_j (w)\right)^k
$$
with $(\zeta,w)\in \widetilde M\times\mathbb B^N$ and  $k\geq N+1$ (cf. Corollary 4.10  in \cite{Lee_Seo}).
In this paper we generalize the method given in \cite{Adachi, Lee_Seo} under the conditions when $\imath\colon \widetilde M\to\mathbb B^N$ is a totally geodesic isometric embedding.
It is worth to emphasize that $\mathbb B^N/\rho(\Gamma)$ does not need to be compact.

Recently Deng-Forn\ae ss \cite{Deng_Fornaess} constructed a hyperconvex disc fiber bundle over a Hopf manifold which does not admit any nonconstant holomorphic function. Here hyperconvex means that there exists a bounded plurisubharmonic exhaustion function. Since $\mathbb B^N\times \mathbb B^N/\Gamma$ is hyperconvex (\cite{Adachi_hyperconvex, Seo}), $M\times_\rho\mathbb B^N$ is also hyperconvex for any $N$.
\bigskip

{\bf Acknowledgement}
The authors would like to thank Adachi Masanori for giving useful comments. He observed that $A_{-1}^2(\Omega) = \bigcap_{\alpha \geq -1} A_\alpha^2(\Omega)$ if $N>n$.
{ The authors are grateful to the referee for careful reading of the paper and valuable suggestions and comments.}
The first author was supported by the Institute for Basic Science (IBS-R032-D1). The second author was partially supported by Basic Science Research Program through the National Research Foundation of Korea (NRF) funded by the Ministry of Education (NRF-2022R1F1A1063038).

\section{Raising operators for submanifolds}
\subsection{Raising operators}
Let $X$ be a K\"ahler manifold of dimension $N$, $g$ be its K\"ahler metric and $G \in C^\infty(X, \Lambda^{1,1}T^*_X)$ be its K\"ahler form.
Let $S^mT_X^*$ be the $m$-th symmetric power of holomorphic cotangent bundle $T^*_X$ of $X$.
Let $Y$ be a K\"ahler manifold of dimension $n$ and $\imath \colon Y \rightarrow X$ be a holomorphic map.
We will denote by $\Lambda^{p,q}T_X^*$ the vector bundle of complex-valued $(p,q)$-forms over $X$.

For any $p=0,1$ and $\tau\colon C^\infty(Y, \imath^*(S^m T_X^*))\to C^\infty(Y, \imath^*(S^mT_X^*)\otimes\imath^*(\Lambda^{p,0}T_X^*)\otimes \Lambda^{0,q}T_Y^*)$, define a map 
$$\mathcal R_\tau^m\colon C^\infty(Y, \imath^*(S^mT_X^*))\to C^\infty(Y, \imath^*(S^{m+p}T_X^*)\otimes \Lambda^{0,q}T_Y^*)$$
by
$$
\mathcal R_\tau^m(u)= \sum \tau_{PQ} (u)a^P\otimes \overline b^Q
$$
where $\tau(u) = \sum_{|P|={p}, |Q|=q} \tau_{PQ}(u)\otimes a^P\otimes \overline b^Q$
for $u\in C^\infty(Y,\imath^* (S^mT_X^*))$.
Here $a=(a_1,\ldots, a_N)$ is a local frame of $\imath^* (T_X^*)$ and $b=(b_1,\ldots, b_n)$ is a local frame of $T_Y^*$.

For example, for an orthonormal frame $(e_1,\ldots, e_N)$ the K\"ahler form $G = \sum e_\ell\otimes \overline e_\ell$ defines a map, which will be also denoted by $G$, 
$$
G\colon C^\infty(Y, \imath^*(S^mT_X^*))\to C^\infty(Y, \imath^*(S^mT_X^*)\otimes \imath^*(\Lambda^{1,0}T_X^*)\otimes \Lambda^{0,1}T_Y^*)
$$
by  $u\mapsto \sum_{\ell=1}^N u\otimes e_\ell\circ \imath\otimes \overline{ \imath^* e_\ell}$.
We remark that we use the notation $e_\ell\circ\imath$ or $e|_{\imath(\zeta)}$, $\zeta\in Y$, for the pull-back section and {$\imath^* e_\ell$} for the pull-back form.
The corresponding raising operator $\mathcal R_G$ is defined by 
$$
\mathcal R_G\colon C^\infty (Y,\imath^*(S^mT^*_X))\to C^\infty(Y, \imath^*(S^{m+1}T^*_X)\otimes \Lambda^{0,1}T^*_Y),
$$
\begin{equation}
    u=\sum_J u_J(e\circ\imath)^J \mapsto \sum_{\ell=1}^N\sum_J u_J (e\circ \imath)^J (e_\ell\circ\imath)\otimes \overline{\imath^*  e_\ell}.
\end{equation}
Since this definition does not depend on the choice of orthonormal frame, it is well defined.
By a similar way, for the Chern connection of $\imath^*(S^mT_X^*)$ and its Chern curvature form, we can define the corresponding raising operators (cf. \cite{Lee_Seo}).

\subsection{{Hodge type identities over $M$}}

Let $M$ be a compact K\"ahler manifold of dimension $n$.
Let $\widetilde M$ be its covering and $\Gamma$ be a subgroup in $\text{Aut}(\widetilde M)$
so that $M$ is biholomorphic to $\widetilde M/ \Gamma$.
Let $\rho\colon \Gamma\rightarrow \text{Aut}(\mathbb B^N)$ be a homomorphism and $\imath \colon \widetilde M \rightarrow \mathbb B^N$ be a $\rho$-equivariant holomorphic embedding, i.e. for any $\gamma\in \Gamma$ and $\zeta \in \widetilde M$, $\rho(\gamma)(\imath(\zeta)) = \imath (\gamma(\zeta))$.
We will assume that $\Sigma:= \mathbb B^N/\rho(\Gamma)$ is a complex manifold and the map $\imath$ induces a holomorphic embedding from $M$ to $\Sigma$. For simplicity, we also denote this map by $\imath$. 

Let 
\begin{equation}\label{Bergman metric}
g_{\mathbb{B}^N} (z) 
    =\sum_{j,k=1}^N \left( \frac{(1-|z|^2)\delta_{kj} + z_{j} \overline z_k} {(1-|z|^2)^2} \right) dz_k\otimes d\overline z_j
\end{equation}
be the normalized Bergman metric of $\mathbb{B}^N$. We assume that the normal bundle ${N=N_\imath:=\imath^*T_{\Sigma}/T_M}$ is holomorphically isomorphic to the orthogonal complement of $T_{M}$ in $\imath^* T_\Sigma$  with respect to the induced metric $g$ from $g_{\mathbb{B}^N}$.
We emphasize that $\Sigma$ does not need to be compact.
Let $h := \imath^* g$ be the pull-back metric of $g$ on $M$.

For any measurable section $\phi$ of $\imath^* (S^{m}T_{\Sigma}^{*}) \otimes \Lambda^{p,q} T_{M}^*$, we define an $L^2$-norm by
\begin{equation}\label{defnorm}
\begin{aligned}
\| \phi \|^2= \int_{M} \langle \phi, \phi \rangle  dV_{M}
\end{aligned}
\end{equation}
where $\langle~ ,~  \rangle$ and $dV_M$ are induced by $g$ and $h=\imath^* g$ on $M$.
In particular, if we express any measurable section $\phi$ of $i^*(S^m T_{\Sigma}^*)$ with respect to a local orthonormal frame $\{e_1,\ldots, e_N\}$ of $T^*_\Sigma$ by $\phi= \sum_{I} f_I (\zeta) {e^I |_{\imath (\zeta)}}$,
we have
$$
\langle \phi, \phi\rangle = \sum_{|I|=m} \frac{I!}{m!} |f_{I}|^2.
$$ 
The formal adjoint of $\bar \partial$ on $L^2 (M, \imath^*(S^m T_{\Sigma}^*) \otimes \Lambda^{p,q} T_{M}^*)$  with respect to the induced metric \eqref{defnorm} will be denoted by $\bar \partial^*_{(p,q), M}$.
For simplicity, we will write $\bar \partial^*_{M}$ instead of $\bar \partial^*_{(p,q),M}$, if no confusion is likely to arise.

For the K\"{a}hler metric $g$ on $\Sigma$, let $g^m$ denote the Hermitian metric on $S^{m}T_{\Sigma}^*$ induced from $g$ and let $\Box_{m,M}^k$ be the complex Laplace operator
\begin{equation*}
\Box_{m,M}^{k} : C^{\infty}(M, \imath^* (S^m T_{\Sigma}^*) \otimes \Lambda^{0,k} T^*_{M} ) \rightarrow C^{\infty}(M, \imath^* (S^m T_{\Sigma}^*) \otimes \Lambda^{0,k} T^*_{M})
\end{equation*}
given by
\begin{equation*}
\Box^{k}_{m,M} = \bar \partial_M \bar \partial^*_{M} + \bar \partial^*_{M} \bar \partial_{M}
\end{equation*}
with respect to the metric $g^m$ where $k=0,1$. We will omit $k,m$ in the notation if there is no ambiguity. Especially we simply write $\Box_{M}$ instead of $\Box^{0}_M$. Let $G^1$ be the Green operator of $\Box^{1}_{M}$.

Since the short exact sequence $0\to T_M\to \imath^*T_\Sigma\to {N} \to 0$ holomorphically splits, we have $$
\imath^*(S^m T^*_{\Sigma}) \cong \bigoplus_{\ell=0}^m
{S^\ell T^*_{M} \otimes S^{m-\ell}N^*},
$$
and as a consequence
$$
C^\infty(M,\imath^*(S^m T^*_{\Sigma})) \cong \bigoplus_{\ell=0}^m
C^\infty(M, {S^\ell T^*_{M} \otimes S^{m-\ell}N^*}),
$$
\begin{equation}\label{decomposition}
H^0(M,\imath^*(S^m T^*_{\Sigma})) \cong \bigoplus_{\ell=0}^m
H^0 (M, {S^\ell T^*_{M} \otimes S^{m-\ell}N^* }).
\end{equation}
\begin{lemma}\label{RG formula}
Let $m$,$\ell$ be non-negative integers with $\ell \leq m$. The raising operator $\mathcal{R}_{G}$ is a linear injective map and for any $u \in C^{\infty}(M, {S^\ell T_{M}^* \otimes S^{m-\ell} N^*})$,
\begin{equation}\label{RG1}
\|\mathcal{R}_G (u) \|^2 = \frac{\ell + n }{m+1} \|u \|^2	
\end{equation}
and
\begin{equation}\label{RG2}
\{ \Box_M, \mathcal{R}_G \} (u) := \Box_M\mathcal R_G u - \mathcal R_G\Box_Mu
=   ( m + \ell)  \mathcal R_G(u).	
\end{equation}
\end{lemma}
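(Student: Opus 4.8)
The plan is to derive linearity and injectivity formally from \eqref{RG1}, to prove \eqref{RG1} by a pointwise computation in a frame adapted to the submanifold, and to prove \eqref{RG2} by pushing the identity up to the ambient manifold $\Sigma$ via the intertwining properties of the maps $\pi_j$ recorded above and then reducing to an explicit computation on $\mathbb B^{N}$.

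\emph{Linearity and injectivity.} Linearity is immediate from the definition of $\mathcal R_G$. Granting \eqref{RG1}, if $\mathcal R_G(u)=0$ then $\|u\|^{2}=\tfrac{m+1}{m+n}\|\mathcal R_G(u)\|^{2}=0$, and since $M$ is compact the $L^{2}$-norm on $C^{\infty}(M,\imath^{-1}(S^{m}T^{*}_{\Sigma}))$ is nondegenerate, hence $u\equiv 0$. So it remains to prove \eqref{RG1} and \eqref{RG2}.

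\emph{Proof of \eqref{RG1}.} Both sides are integrals over $M$ of pointwise inner products, so it suffices to verify the identity fibrewise. Fix $q\in M$, put $p=\imath(q)$, and choose a local orthonormal coframe $\{e_{1},\dots,e_{N}\}$ of $T^{*}_{\Sigma}$ near $p$ adapted to $\imath(M)$ at $q$ as in the proof of Lemma \ref{comm2}: by \eqref{useful} one may take $\imath^{*}e_{1}|_{q},\dots,\imath^{*}e_{n}|_{q}$ to be an orthonormal coframe of $T^{*}_{M}$ at $q$ and $\imath^{*}e_{n+1}|_{q}=\dots=\imath^{*}e_{N}|_{q}=0$. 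Writing $u=\sum_{|J|=m}u_{J}e^{J}$ and using that $\pi_{1}$ restricts the $S^{m+1}T^{*}_{\Sigma}$-factor and pulls back the $(0,1)$-factor, one gets at $q$
\[
\mathcal R_{G}(u)\big|_{q}=\sum_{l=1}^{n}\big(u|_{p}\odot e_{l}|_{p}\big)\otimes\overline{\imath^{*}e_{l}}\,\big|_{q},
\]
the terms with $l>n$ vanishing. Since $\{\overline{\imath^{*}e_{l}}\}_{l\le n}$ is orthonormal, $\langle\mathcal R_{G}(u),\mathcal R_{G}(u)\rangle|_{q}=\sum_{l=1}^{n}\|u|_{p}\odot e_{l}\|^{2}$, and this equals $\tfrac{m+n}{m+1}\langle u,u\rangle|_{q}$ by a direct symmetric-algebra computation on monomials: from $\|e^{I}\|^{2}=\tfrac{I!}{m!}$ one has $\|e^{I}\odot e_{l}\|^{2}=\|e^{I+\epsilon_{l}}\|^{2}=\tfrac{(i_{l}+1)I!}{(m+1)!}$, whence $\sum_{l}\|e^{I}\odot e_{l}\|^{2}=\tfrac{m+n}{m+1}\|e^{I}\|^{2}$. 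Integrating over $M$ yields \eqref{RG1}.

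\emph{Proof of \eqref{RG2}.} Here the strategy is to reduce \eqref{RG2} to the corresponding identity on the ambient space. From Lemma \ref{preliminaries}(2) and Lemma \ref{comm2} one checks that the restriction maps intertwine the Laplacians, $\pi_{k}\circ\Box_{\Sigma}=\Box_{M}\circ\pi_{k}$ for $k=0,1$ and every symmetric weight, while Lemma \ref{comm} gives $\pi_{1}\circ\mathcal R^{\Sigma}_{G}=\mathcal R_{G}\circ\pi_{0}$, where $\mathcal R^{\Sigma}_{G}$ is the ambient operator $\cdot\odot G$ on $\Sigma$. Hence, for $u\in C^{\infty}(M,\imath^{-1}(S^{m}T^{*}_{\Sigma}))$ and any smooth extension $U$ of $u$ to $\Sigma$,
\[
\{\Box_{M},\mathcal R_{G}\}u=\Box_{M}\big(\pi_{1}\mathcal R^{\Sigma}_{G}U\big)-\mathcal R_{G}\big(\pi_{0}\Box_{\Sigma}U\big)=\pi_{1}\big(\Box_{\Sigma}\mathcal R^{\Sigma}_{G}U-\mathcal R^{\Sigma}_{G}\Box_{\Sigma}U\big)=\pi_{1}\big(\{\Box_{\Sigma},\mathcal R^{\Sigma}_{G}\}U\big),
\]
so it is enough to establish the ambient identity $\{\Box_{\Sigma},\mathcal R^{\Sigma}_{G}\}=2m\,\mathcal R^{\Sigma}_{G}$ on $C^{\infty}(\Sigma,S^{m}T^{*}_{\Sigma})$. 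This is local, and since $\Sigma$ is locally biholomorphically isometric to $\mathbb B^{N}$ with its Bergman metric, it remains to prove it on $\mathbb B^{N}$. I would carry this out by a direct computation in holomorphic normal coordinates centred at a point, in the spirit of the proof of Lemma \ref{comm2}: the purely differential contributions to $\Box_{\Sigma}\mathcal R^{\Sigma}_{G}U-\mathcal R^{\Sigma}_{G}\Box_{\Sigma}U$ are arranged to cancel, the first derivatives of the metric coefficients and of the transition functions between the holomorphic and orthonormal coframes vanish at the centre, and what survives are the second-order terms, i.e.\ the (constant holomorphic sectional) curvature of the Bergman metric, which after the symmetric-product bookkeeping yield precisely the scalar $2m$. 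I expect this last curvature computation — and the careful tracking of how $\bar\partial$, $\bar\partial^{*}$, the frame change and the symmetric product interact — to be the main obstacle; once it is done, the displayed identities give \eqref{RG2} on $M$ at once.
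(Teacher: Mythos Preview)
Your approach coincides with the paper's. For \eqref{RG1} both arguments do the same pointwise computation in an adapted orthonormal frame, and for \eqref{RG2} both reduce to the ambient commutator $\{\Box_{\Sigma},\mathcal R^{\Sigma}_{G}\}$ via the intertwining relations $\pi_{k}\circ\Box_{\Sigma}=\Box_{M}\circ\pi_{k}$ and $\pi_{1}\circ\mathcal R^{\Sigma}_{G}=\mathcal R_{G}\circ\pi_{0}$. The only difference is that the paper does not redo the ambient curvature computation you flag as the main obstacle: it invokes Proposition~3.6 of \cite{Lee_Seo}, which gives $\{\Box_{\Sigma},\mathcal R^{\Sigma}_{G}\}=\mathcal R^{\Sigma}_{\Theta(S^{m}T^{*}_{\Sigma})}$, and then uses the explicit curvature of the Bergman metric on $\mathbb B^{N}$ to identify this with $2m\,\mathcal R^{\Sigma}_{G}$.
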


\begin{proof}
Let $\{e_1,\ldots, e_N\}$ be a local orthonormal frame  of $T^*_\Sigma$ on a small open set $U$ of $\Sigma$
so that $g= \sum_{\tau=1}^N e_{{\tau}} \otimes \bar e_{{\tau}}$ and
{$h=\sum_{\tau=1}^n \imath^*e_{{\tau}} \otimes \imath^*\bar e_{{\tau}}$}.
Let $u= \sum_{|I|=m} {u}_I e^{I}$ be a smooth section of $\imath^*(S^m T_{\Sigma}^*)$. Since $\imath^{*} e_{{\tau}} =0$ for any $n+1 \leq {\tau} \leq N$, we obtain
$$
\mathcal{R}_{G} u = \sum_{{\tau}=1}^{n} \sum_{|I|=m} {u}_I e^{I} e_{{\tau}} \otimes \overline{\imath^{*} e_{{\tau}}}.
$$
If $ u \in C^{\infty}(M, {S^\ell T_{M}^* \otimes S^{m-\ell}N^*})$, then one has
\begin{equation*}
\begin{aligned}
\langle \mathcal{R}_{G} u , \mathcal{R}_{G} u \rangle &= \sum_{{\tau},m=1}^{n} \langle ue_{{\tau}} \otimes \overline{\imath^{*} e_{{\tau}}} , u e_m \otimes \overline{\imath^{*} e_m} \rangle
=\sum_{{\tau}=1}^{n} \langle u e_{{\tau}} , u e_{{\tau}} \rangle  \\
&= \sum_{|I|=m} \sum_{{\tau}=1}^{n} \frac{i_1! \cdots (i_{{\tau}} +1)! \cdots i_n! \cdots i_N!}{(m+1)!} | {u}_I |^2\\
&= \sum_{|I|=m} \frac{\big( \sum_{j=1}^{n} i_j + n  \big)}{m+1}  \frac{I !}{m!}  |{u}_I |^2 \\
&{ = \sum_{|I|=m} \frac{\ell  + n }{m+1} \frac{I !}{m!}  |{u}_I |^2 
= \frac{\ell+n}{m+1}\|u\|^2.}
\end{aligned}
\end{equation*}
which implies \eqref{RG1}.

To prove \eqref{RG2}, let $q\in M$ and $(\zeta_1, \ldots, \zeta_n)$ be a holomorphic normal coordinate system on a small open set $q \in U \subset M$ such that $U \cong \imath(U) \subset \Sigma$. Let $p:=\imath (q) \in \imath(U)$ and take a holomorphic normal coordinate system $(z_1,\ldots, z_N)$ at $p$ such that for $\imath_k:=z_k\circ \imath$, $k=1,\ldots, N$
we have
\begin{equation}\label{totally geo}
\frac{\partial \imath_m}{\partial \zeta_\mu} \bigg|_{q} =\begin{cases}
\delta_{m \mu} & (m=1,\ldots,n),\\
0 & (m=n+1,\ldots,N).
\end{cases}
\end{equation}

Let $\{ h_{\gamma} \}$ be a holomorphic normal frame of $\imath^* \left(S^m T_{\Sigma}^*\right)$. 
For any smooth section $u = \sum_{\gamma} u_{\gamma} h_{\gamma}$ of $\imath^*(S^m T_{\Sigma}^*$), we have
\begin{equation*}
\begin{aligned}
\bar \partial^*_M \mathcal{R}_{G}(u) &= \bar \partial^*_M \left( \sum_{\alpha, \beta=1}^{N} \sum_{\gamma} u_\gamma (g_{\alpha \beta} \circ \imath)h_{\gamma} (dz_{\alpha} \circ \imath) \otimes \overline{\imath^* d z_{\beta}} \right) \\
&= \bar \partial^*_M \left( \sum_{\alpha, \beta=1}^{N}  \sum_{{\tau}=1}^{n} \sum_{\gamma} u_\gamma (g_{\alpha \beta} \circ \imath) h_{\gamma}  (dz_{\alpha} \circ \imath) \otimes \overline{\frac{\partial \imath_{\beta}}{\partial \zeta_{{\tau}}}}  d \bar \zeta_{{\tau}} \right) \\
& = -\sum_{\alpha, \beta =1}^{N} \sum_{k=1}^n \sum_{\gamma} \frac{\partial \big(u_\gamma (g_{\alpha \beta} \circ \imath) \overline{\frac{\partial \imath_{\beta}  }{\partial \zeta_k }  } \big) }{\partial \zeta_k} h_{\gamma}(dz_{\alpha} \circ \imath)  + O(|\zeta|) 
\end{aligned}
\end{equation*}
\begin{equation*}
\begin{aligned}
\qquad \qquad \qquad \qquad \qquad &=  - \sum_{\alpha, \beta =1}^{N} \sum_{k=1}^n \sum_{\gamma} \left( \overline{\frac{\partial \imath_{\beta}}{\partial \zeta_{k}}} \frac{\partial (g_{\alpha \beta} \circ \imath) }{\partial \zeta_k} u_{\gamma}
+\overline{\frac{\partial \imath_{\beta}}{\partial \zeta_{k}}} \frac{\partial  u_{\gamma}}{\partial \zeta_k}(g_{\alpha \beta} \circ \imath)\right)h_\gamma  (dz_{\alpha} \circ \imath) + O(|\zeta|)\\
&= - \sum_{k, \alpha=1}^{n} \sum_{\gamma} \overline{\frac{\partial \imath_{\alpha}}{\partial \zeta_{k}}} \frac{\partial  u_{\gamma}}{\partial \zeta_k} h_\gamma (dz_{\alpha} \circ \imath)  + O(|\zeta|).
\end{aligned}
\end{equation*}
By \eqref{totally geo}, we obtain
$$
\bar \partial^*_M \mathcal{R}_G (u) = - \sum_{k=1}^{n} \sum_{\gamma} \frac{\partial  u_{\gamma}}{\partial \zeta_k}h_\gamma (dz_{k} \circ \imath)
$$
at the point ${q}$.
Let $\widetilde D_m$ be {the $(1,0)$ part} of the Chern connection of $\imath^*\left(S^m T_{\Sigma}^*\right)$ induced from $(S^m T_{\Sigma}^*, g^m)$. 
Then 
\begin{equation*}
\begin{aligned}
{\widetilde D_m (u)} = 
\widetilde D_m \left( \sum_{\gamma} {u}_\gamma h_\gamma \right) &= \sum_{k=1}^{n} \sum_{\gamma} \frac{\partial {u}_\gamma}{\partial \zeta_k} h_\gamma \otimes d \zeta_k + \sum_{k=1}^{n} \sum_{\gamma, \mu} {u}_{\gamma} {\theta}_{\gamma k}^{\mu} h_{\mu}  \otimes d \zeta_{k}
\end{aligned}
\end{equation*}
where ${\theta}_{\gamma k}^{\mu}$ is the connection one form of $\widetilde D_m$. 
Then it follows that
\begin{equation*}
\begin{aligned}
\mathcal{R}_{\widetilde D_m} \left( \sum_{\gamma} u_{\gamma} h_{\gamma} \right) 
&= \sum_{k=1}^{n} \sum_{\gamma} \frac{\partial u_{\gamma} }{\partial \zeta_k} h_{\gamma} (dz_{k} \circ \imath) + \sum_{k=1}^{n} \sum_{\gamma, \mu}  u_\gamma  \theta_{\gamma k}^{\mu}h_\mu (dz_{k} \circ \imath).
\end{aligned}
\end{equation*}
Hence
$$
\bar \partial^*_M \mathcal{R}_{G} u = - \mathcal{R}_{\widetilde D_m} u
$$
at ${q}$. 

Let $\theta_{\gamma}^{\mu} = \sum_k \theta_{\gamma k}^{\mu} d \zeta_k$.
Since the Chern curvature form $\Theta (\imath^* (S^m T_{\Sigma}^*))$ of $\imath^*(S^m T_{\Sigma}^*)$  satisfies $\Theta\left(\imath^*\left( S^m T_{\Sigma}^* \right) \right)= \bar \partial \theta$ for $\theta:=(\theta_\gamma^{\mu})$, it follows that
$$
\frac{\partial {\theta}^{\mu}_{\gamma k}}{\partial \bar \zeta_\lambda} = -R^{\mu}_{\gamma k \bar \lambda}
$$
where $R^{\mu}_{\gamma k \bar \lambda}$ is given by $$
\Theta\left(\imath^*\left( S^m T_\Sigma^* \right)\right)= \sum_{\alpha, \beta} \sum_{s,t=1}^{n} R^{\beta}_{\alpha s \bar t } h_{\alpha}^* \otimes h_{\beta} \otimes d\zeta_s \wedge d\bar \zeta_t.$$ 
Therefore by using \eqref{totally geo}, it follows that
\begin{equation}\label{curvature1}
\begin{aligned}
&\bar \partial_M \bar \partial^*_M \mathcal{R}_{G} u = - \bar \partial_M \mathcal{R}_{\widetilde D_m} u \\
&= - \sum_{k, \lambda=1}^{n} \sum_{\gamma}  \frac{\partial^2 u_{\gamma}}{\partial  \zeta_k \partial\bar \zeta_{\lambda}} h_\gamma (dz_{k} \circ \imath) \otimes d  \bar\zeta_{\lambda} 
+ \sum_{k,m=1}^{n} \sum_{\gamma, \mu}u_\gamma R^{\mu}_{\gamma k \bar m}h_{\mu} (dz_{k} \circ \imath) \otimes {d\overline\zeta_m} + O(|\zeta|).
\end{aligned}
\end{equation}
Furthermore, \small
\begin{equation}\label{calculation to Box}
\begin{aligned}
\bar \partial^*_M \bar \partial_M \mathcal{R}_{G} u 
&= \sum_{\alpha, \beta =1}^{N} \sum_{\gamma} \bar \partial^*_M \bar \partial_M \bigg( u_\gamma (g_{\alpha \beta} \circ \imath)  h_{\gamma}  (dz_{\alpha} \circ \imath) \otimes \overline{\imath^* d z_{\beta}} \bigg) \\
&= \sum_{\alpha,\beta=1}^{N} \sum_{\lambda=1}^{n} \sum_{\gamma} \bar \partial^*_M \bar \partial_M \bigg( u_\gamma (g_{\alpha \beta} \circ \imath) h_{\gamma}  (dz_{\alpha} \circ \imath) \otimes \overline{\frac{\partial \imath_{\beta}}{\partial \zeta_\lambda}}d \bar{ \zeta _{\lambda}}   \bigg) \\
&=  \bar \partial^*_M \left( \sum_{\alpha,\beta=1}^{N} \sum_{\lambda,  k=1}^{n} \sum_{\gamma}  \frac{\partial \big( u_\gamma (g_{\alpha \beta} \circ \imath) \overline{\frac{\partial \imath_\beta}{\partial \zeta_\lambda}} \big)}{\partial \bar \zeta_k}  h_{\gamma}
(dz_{\alpha} \circ \imath) \otimes d\bar\zeta_k \wedge d \bar \zeta_\lambda  \right) \\
&= -  \sum_{\alpha,\beta=1}^{N} \sum_{\lambda,  k=1}^{n} \sum_{\gamma}  \bigg( \frac{\partial^2 \big( u_\gamma (g_{\alpha \beta} \circ \imath) \overline{\frac{\partial \imath_\beta}{\partial \zeta_\lambda}} \big)}{\partial \zeta_k \partial \bar \zeta_k}   d\bar\zeta_\lambda - \frac{\partial^2 \big( u_\gamma (g_{\alpha \beta} \circ \imath) \overline{\frac{\partial \imath_\beta}{\partial \zeta_\lambda}} \big)}{\partial \zeta_\lambda \partial \bar \zeta_k}  d \bar \zeta_k  \bigg) h_{\gamma}
(dz_{\alpha} \circ \imath)  +O(| \zeta |) \\
&= -\sum_{\alpha =1}^{N}\sum_{\beta,k=1}^{n} \sum_\gamma \left( \frac{\partial^2 \big( u_\gamma (g_{\alpha \beta} \circ \imath)  \big) }{\partial \zeta_k \partial \bar \zeta_k} d \bar \zeta_{\beta} -\frac{\partial^2 \big( u_\gamma (g_{\alpha \beta} \circ \imath) \big) }{\partial \zeta_\beta \partial \bar \zeta_k}  d \bar \zeta_{k}\right)h_{\gamma} (dz_{\alpha} \circ \imath) \\
&\quad - \sum_{\alpha, \beta=1}^{N}\sum_{\lambda, k=1}^n \sum_\gamma \bigg( \frac{\partial (u_\gamma (g_{\alpha\beta} \circ \imath))}{\partial \zeta_k}  d \bar \zeta_\lambda - \frac{\partial (u_\gamma (g_{\alpha \beta} \circ \imath))}{\partial \zeta_\lambda}  d \bar \zeta_k \bigg)\overline{\frac{\partial^2 \imath_\beta}{\partial \zeta_k \partial \zeta_\lambda}} h_\gamma (dz_\alpha \circ \imath)
+ O(|\zeta|)\\
&=-\sum_{\alpha =1}^{N}\sum_{\beta,k=1}^{n} \sum_\gamma \left( \frac{\partial^2 \big( u_\gamma (g_{\alpha \beta} \circ \imath)  \big) }{\partial \zeta_k \partial \bar \zeta_k} d \bar \zeta_{\beta} -\frac{\partial^2 \big( u_\gamma (g_{\alpha \beta} \circ \imath) \big) }{\partial \zeta_\beta \partial \bar \zeta_k}  d \bar \zeta_{k}\right)h_{\gamma} (dz_{\alpha} \circ \imath) + O(|\zeta|). 
\end{aligned}
\end{equation}
Note that the last equality of \eqref{calculation to Box} follows by
\begin{equation*}
\begin{aligned}
-& \sum_{\alpha, \beta=1}^{N}\sum_{\lambda, k=1}^n \sum_\gamma \bigg( \frac{\partial (u_\gamma (g_{\alpha\beta} \circ \imath))}{\partial \zeta_k}  d \bar \zeta_\lambda - \frac{\partial (u_\gamma (g_{\alpha \beta} \circ \imath))}{\partial \zeta_\lambda}  d \bar \zeta_k \bigg)\overline{\frac{\partial^2 \imath_\beta}{\partial \zeta_k \partial \zeta_\lambda}} h_\gamma (dz_\alpha \circ \imath)\\
&=-\sum_{\alpha, \beta=1}^{N}\sum_{\lambda, k=1}^n \sum_\gamma \bigg( \frac{\partial (u_\gamma (g_{\alpha\beta} \circ \imath))}{\partial \zeta_\lambda}  d \bar \zeta_k \bigg) \overline{\frac{\partial^2 \imath_\beta}{\partial \zeta_k \partial \zeta_\lambda}} h_\gamma (dz_\alpha \circ \imath)\\
&\quad - \sum_{\alpha, \beta=1}^{N}\sum_{\lambda, k=1}^n \sum_\gamma \bigg( \frac{\partial (u_\gamma (g_{\alpha \beta} \circ \imath))}{\partial \zeta_\lambda}  d \bar \zeta_k \bigg)\overline{\frac{\partial^2 \imath_\beta}{\partial \zeta_k \partial \zeta_\lambda}} h_\gamma (dz_\alpha \circ \imath)
\\
&= 0.
\end{aligned}
\end{equation*}
Moreover,
\begin{equation*}
\begin{aligned}
&\frac{\partial^2 \big( u_\gamma (g_{\alpha \beta} \circ \imath) \big) }{\partial \zeta_k \partial \bar \zeta_k} d \bar \zeta_\beta - \frac{\partial^2 \big( u_\gamma (g_{\alpha \beta} \circ \imath)  \big) }{\partial \zeta_\beta \partial \bar \zeta_k} d \bar \zeta_k \\
&= \left( \frac{\partial^2 (g_{\alpha \beta} \circ \imath)}{\partial \zeta_k \partial \bar \zeta_k} d \bar \zeta_\beta - \frac{\partial^2 (g_{\alpha \beta} \circ \imath )}{\partial \zeta_{\beta} \partial \bar \zeta_k} d \bar \zeta_k \right)u_{\gamma }
+(g_{\alpha \beta} \circ \imath) \left( \frac{\partial^2 u_{\gamma}}{\partial \zeta_k \partial \bar \zeta_k} d \bar \zeta_\beta- \frac{\partial^2 u_{\gamma}}{\partial \zeta_{\beta} \partial \bar \zeta_k} d \bar \zeta_k \right)+O(|\zeta|).
\end{aligned}
\end{equation*}
Since $g$ is K\"ahler, it follows that
\begin{equation}\label{comm1}
\begin{aligned}
\bar \partial^*_M \bar \partial_M \mathcal{R}_{G} u 
&= -\sum_{\alpha =1}^{N} \sum_{\beta, k=1}^{n} \sum_{\gamma}(g_{\alpha \beta} \circ \imath) h_{\gamma} (dz_{\alpha} \circ \imath) \otimes \left( \frac{ \partial^2 u_\gamma  }{\partial \zeta_k \partial \bar \zeta_k } d \bar \zeta_\beta - \frac{\partial^2 u_{\gamma}}{\partial \zeta_\beta \partial \bar \zeta_k} d \bar \zeta_k  \right) + O(|\zeta|) .
\end{aligned}
\end{equation}
Since
\begin{equation}\label{comm2}
\begin{aligned}
\mathcal{R}_{G} \bar \partial^*_{M} \bar \partial_{M} u &= - \sum_{\alpha,\beta=1}^N\sum_{\mu=1}^n \sum_{\gamma}  \frac{\partial^2 u_{\gamma} }{\partial \zeta_{\mu} \partial \bar \zeta_{\mu}} h_{\gamma} (g_{\alpha \beta} \circ \imath) (dz_{\alpha} \circ \imath) \otimes \overline{\imath^* d z_{\beta}} + O(|\zeta|),
\end{aligned}
\end{equation}
by adding \eqref{curvature1}, \eqref{comm1}, \eqref{comm2}, and using \eqref{totally geo} we have
\begin{equation*}
\begin{aligned}
\{ \Box, \mathcal R_{G} \} u &= \bar\partial_M \bar \partial^*_M \mathcal{R}_{G} u + \bar \partial^*_M \bar \partial_M \mathcal{R}_{G} u - \mathcal{R}_{G} {\bar \partial_{{M}}^{*} \bar \partial_{{M}}} u\\ 
&=  \sum_{k,j=1}^{n} \sum_{\gamma, \mu} u_\gamma   R^{\mu}_{\gamma k \bar j} d\zeta_k h_{\mu}  \otimes d \bar \zeta_j.
\end{aligned}
\end{equation*}
Moreover at ${q}$, we have
\begin{equation*}
\sum_{k,m=1}^{n} \sum_{\gamma, \mu} u_\gamma  R^{\mu}_{\gamma k \bar m} d{\zeta}_k h_{\mu}  \otimes d \bar \zeta_m  =  \sum_{|I|=m} \sum_{j=1}^{N} i_j u_{I} e_1^{i_1} \cdots e_j^{i_j-1} \cdots e_N^{i_N} \cdot \mathcal{R}_{\Theta(\imath^* T_{\Sigma}^* )}(e_j)
\end{equation*}
where ${\mathcal{R}_{\Theta(\imath^* T_{\Sigma}^*)}}$ is defined by
$$
\mathcal{R}_{\Theta(\imath^* T_{\Sigma}^*)} (e_j) = \sum_{a=1}^{N}\sum_{k,m=1}^{n}e_a \otimes \Theta(T_{\Sigma}^*)^{a}_{jkm} e_k \wedge \bar e_m.
$$
From
$$
\Theta(T_{\Sigma})^{a}_{jkm}=-(\delta_{ak}\delta_{jm}+\delta_{aj}\delta_{km}),
$$
one has
$$
\Theta(T_{\Sigma}^*)^{a}_{jkm}=-\Theta(T_{\Sigma})^*= \delta_{jk}\delta_{am}+\delta_{ja}\delta_{mk}
$$
and hence
\begin{equation*}
\begin{aligned}
{\mathcal{R}_{\Theta(\imath^* T_{\Sigma}^*)}}(e_j)= \sum_{a=1}^{N}\sum_{k,m=1}^{n} e_a \otimes\Theta(T_{\Sigma}^*)^a_{jkm} e_k \wedge \bar e_m  = \sum_{a=1}^{n}  {\epsilon_j e_a \otimes e_j \wedge \bar e_a} + e_j \otimes \sum_{r=1}^{n}e_r \wedge \bar e_r
\end{aligned}
\end{equation*}
where 
\begin{equation}
\epsilon_\mu=
\begin{cases}
1 & \text{ if } \mu \in \{1, \cdots, n \}, \\
0 & \text{ otherwise. }
\end{cases}
\end{equation}
Therefore we have 
\begin{equation*}
\begin{aligned}
&\sum_{|I|=m}\sum_{j=1}^{N}i_j {u_I} e_1^{i_1} \cdots e^{i_j-1}_{j} \cdots e_N^{i_N} \cdot \mathcal{R}_{\Theta(\imath^* T_{\Sigma}^*)}(e_j)  \\
&= \sum_{|I|=m} \sum_{j=1}^{N}i_j {u_I} e_1^{i_1} \cdots e_j^{i_j-1} \cdots e_N^{i_N} 
\cdot \left( \sum_{a=1}^n e_a \otimes e_j \wedge \bar e_a + e_j \otimes \sum_{r=1}^n e_r \wedge \bar e_r \right)  \\
&=\sum_{|I|=m}\sum_{r=1}^{n}\left(m+\sum_{j=1}^{n} i_j \right) {u_I} e_1^{i_1} \cdots e_n^{i_n} \cdots e_{N}^{i_N} \cdot e_r \otimes \bar e_r \\
&= \sum_{|I|=m} \left( m+ \sum_{j=1}^{n} i_j \right) \mathcal{R}_{G} (u_I e^I).
\end{aligned}
\end{equation*}
and it implies \eqref{RG2} if $ u \in C^{\infty}(M, {S^\ell T_{ M}^* \otimes S^{m-\ell}N^*})$.
\end{proof}

\begin{remark}
\begin{enumerate}
\item  {If  $\imath$ is totally geodesic,}
since up to the composition with an automorphism of $\mathbb B^N$ we have
 $$\imath (\widetilde M)=\mathbb B^N\cap \{(z_1,\ldots, z_n,0,\cdots,0)\in \mathbb C^N : z_j\in \mathbb C, \forall j\}$$ 
and the normalized Bergman metric of $\mathbb B^N$ is given by 
\eqref{Bergman metric}, 
 it follows that the normal bundle $\imath^*T_{\mathbb B^N}/ T_{\widetilde M}$ is holomorphically isomorphic to the orthogonal complement of $T_{\widetilde M}$ in $\imath^*T_{\mathbb B^N}$.
This implies that under the condition given in Theorem~\ref{main}, the normal bundle $\imath^*T_{\Sigma}/T_M$ is holomorphically isomorphic to the orthogonal complement of $T_{M}$ in $\imath^* T_\Sigma$  with respect to the induced metric $g$ from the Bergman metric of $\mathbb B^N$.

\item
For a compact manifold $M$ and a holomorphic embedding, not necessarily totally geodesic, $\imath\colon M\rightarrow {\Sigma:={\mathbb B}^N/\Gamma}$,
let $\{e_1, \cdots, e_N \}$ be a local orthonormal frame of $T_{\Sigma}^*$ so that $g= \sum_{\ell=1}^{N} e_{\ell} \otimes \bar e_{\ell}$ and {$h:=\imath^*g=\sum_{\ell=1}^{n} i^* e_\ell \otimes i^* \bar e_\ell$.} For any smooth section 
$$
u= \sum_{i_1+\cdots +i_N=m} {u}_{i_1 \cdots i_N} e_1^{i_1} \cdots e_n^{i_n} \cdots e_N^{i_N}
$$
of $\imath^* (S^m T_{\Sigma}^*)$, we have
\begin{equation*}
\|\mathcal{R}_G (u) \|^2 = \sum_{|I|=m}\frac{ i_1 +\cdots + i_n  + n }{m+1} \|u_I e^I \|^2	
\end{equation*}
and
\begin{equation*}
\{ \Box_M, \mathcal{R}_G \} (u) := \Box_M\mathcal R_G u - \mathcal R_G\Box_Mu
=  \sum_{|I|=m} ( m + i_1 + \cdots + i_n)  \mathcal R_G(u_I e^I).	
\end{equation*}
\end{enumerate}
\end{remark}

For each positive integer $m$, non-negative integer $\ell $ with $\ell\leq m$ and $k=0,1$, let $\Box_{m,M}^{k,\ell}$ denote the complex Laplace operator on 
\begin{equation*}
{S^\ell T_{M}^* \otimes S^{m-\ell} N^*}\otimes \Lambda^{0,k}T^*_{M}\subset \imath^*(S^m T^*_{\Sigma})\otimes \Lambda^{0,k}T^*_M
\end{equation*}
over $M$.

\begin{corollary}\label{eigenvalue}
Let $\ker^{\perp} (\Box_{m,M}^{0,\ell} - \lambda I)$ be the orthogonal complement of $\ker(\Box_{m,M}^{0,\ell} - \lambda I)$ in $L^2(M, {S^\ell T_{M}^* \otimes S^{m-\ell} N^*}   )$. Then one has
\begin{enumerate}
\item $\mathcal{R}_{G} \big( \ker(\Box_{m,M}^{0,\ell} - \lambda I) \big) \subset \ker \big(\Box_{m+1,M}^{1,\ell} - (\lambda+{m+\ell}) I \big)$,
\item $\mathcal{R}_{G} \big( \ker^{\perp} (\Box_{m, M}^{0,\ell} - \lambda I ) \big) \subset \ker^{\perp} \big(\Box_{m+1,M}^{1,\ell} - (\lambda+{m+\ell}) I \big)$.
\end{enumerate}
\end{corollary}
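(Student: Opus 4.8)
The plan is to read off both inclusions from the commutator identity \eqref{RG2} of Lemma~\ref{RG formula}, combined with the spectral theory of the elliptic self-adjoint Laplacians $\Box^0_{m,M}$ and $\Box^1_{m+1,M}$ on the compact manifold $M$ (here I write $\mathcal R_G$ for the raising operator $\mathcal R_G^m$ acting on $\imath^{-1}(S^mT^*_\Sigma)$). For (1) I would argue directly: if $u\in\ker(\Box^0_{m,M}-\lambda I)$, then $u$ is smooth by elliptic regularity, hence so is $\mathcal R_G u$ — recall $\mathcal R_G$ is a purely algebraic, zeroth-order operator, by its definition together with Lemma~\ref{preliminaries}(1) — and \eqref{RG2} gives
\[
\Box^1_{m+1,M}(\mathcal R_G u)=\mathcal R_G(\Box^0_{m,M}u)+2m\,\mathcal R_G u=(\lambda+2m)\,\mathcal R_G u,
\]
which is exactly the asserted inclusion. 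The same computation holds with $\lambda$ replaced by an arbitrary eigenvalue $\mu$ of $\Box^0_{m,M}$, a fact I will use in (2).

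For (2) the first point is that $\mathcal R_G$ extends to a bounded injective operator between the $L^2$-completions: in a local orthonormal frame $\mathcal R_G u=\sum_{l=1}^n u e_l\otimes\overline{\imath^*e_l}$, a pointwise algebraic expression, and \eqref{RG1} shows $\|\mathcal R_G u\|^2=\frac{m+n}{m+1}\|u\|^2$, so $\mathcal R_G$ maps $L^2(M,\imath^{-1}(S^mT^*_\Sigma))$ boundedly into $L^2(M,\imath^{-1}(S^{m+1}T^*_\Sigma)\otimes\Lambda^{0,1}T^*_M)$; in particular the left-hand side of (2) makes sense. Since $M$ is compact, $L^2(M,\imath^{-1}(S^mT^*_\Sigma))$ is the orthogonal Hilbert sum of the eigenspaces $\ker(\Box^0_{m,M}-\mu I)$, so $\ker^\perp(\Box^0_{m,M}-\lambda I)$ is the $L^2$-closure of $\bigoplus_{\mu\neq\lambda}\ker(\Box^0_{m,M}-\mu I)$. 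For $u$ in the $\mu$-eigenspace with $\mu\neq\lambda$, part (1) gives $\mathcal R_G u\in\ker(\Box^1_{m+1,M}-(\mu+2m)I)$; since $\mu+2m\neq\lambda+2m$ and eigenspaces of the self-adjoint operator $\Box^1_{m+1,M}$ for distinct eigenvalues are mutually orthogonal, $\mathcal R_G u\perp\ker(\Box^1_{m+1,M}-(\lambda+2m)I)$. This orthogonality passes to finite sums and, by boundedness of $\mathcal R_G$, to $L^2$-closures, yielding $\mathcal R_G\big(\ker^\perp(\Box^0_{m,M}-\lambda I)\big)\subset\ker^\perp(\Box^1_{m+1,M}-(\lambda+2m)I)$.

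An alternative derivation of (2) that I might record instead is to take formal adjoints in \eqref{RG2}: pairing the identity against smooth sections and integrating by parts gives $\mathcal R_G^*\,\Box^1_{m+1,M}-\Box^0_{m,M}\,\mathcal R_G^*=2m\,\mathcal R_G^*$, so every $v\in\ker(\Box^1_{m+1,M}-(\lambda+2m)I)$ satisfies $\Box^0_{m,M}(\mathcal R_G^*v)=\lambda\,\mathcal R_G^*v$, and then for $u\perp\ker(\Box^0_{m,M}-\lambda I)$ one has $\langle\mathcal R_G u,v\rangle=\langle u,\mathcal R_G^*v\rangle=0$. I do not anticipate a genuine obstacle here: the only steps needing a little care are the passage from the $C^\infty$-level identity \eqref{RG2} to the $L^2$-statement of (2), which is exactly what the norm equality \eqref{RG1} provides, and the smoothness of the eigensections used throughout, which is automatic from elliptic regularity on the compact manifold $M$.
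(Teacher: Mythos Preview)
Your proof is correct and follows essentially the same route as the paper: part (1) is read off directly from the commutator identity \eqref{RG2}, and part (2) is obtained by decomposing $\ker^\perp(\Box^0_{m,M}-\lambda I)$ into the eigenspaces of $\Box^0_{m,M}$ with eigenvalue $\mu\neq\lambda$, applying (1) to each, and invoking the self-adjointness of $\Box^1_{m+1,M}$ to separate the resulting $(\mu+2m)$-eigenspaces from the $(\lambda+2m)$-eigenspace. You are simply more explicit than the paper about the auxiliary ingredients (elliptic regularity for smoothness of eigensections, and the use of \eqref{RG1} to pass from the smooth identity to the $L^2$-closure), and your alternative adjoint argument is a nice bonus but not needed.
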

\begin{proof}
(1) is a consequence of the equation \eqref{RG2}.
In view of \cite[Corollary~3.16]{Demailly}, 
 $\ker^{\perp}(\Box_{m,M}^{0,\ell}-\lambda I)$ is the direct sum of $\ker \Box_{m,M}^{0,\ell}$ and  eigenspaces of $\Box_{m, M}^{0,\ell}$ whose eigenvalues are different from $\lambda$. Therefore, (2) follows by Lemma~\ref{RG formula} and self-adjointness of $\Box^{1,\ell}_{m+1,M}$.
\end{proof}

\section{Construction of holomorphic functions on $M \times_\rho \mathbb B^N$}

\subsection{Preliminaries}

Let $z$ be a fixed point in the unit ball $\mathbb{B}^N$. For one dimensional vector space $[z]$ spanned by $z$, we define an orthogonal projection $P_{z}$ from $\mathbb{C}^N$ onto $[z]$. Another orthogonal projection $Q_{z}$ is defined by $P_{z}+Q_{z} = Id_{z}$. Consider an automorphism $T_z$ of $\mathbb{B}^N$ given by
$$
T_z (w) = \frac{z-P_z (w) - s_z Q_z (w)}{1- w \cdot \bar z}
$$
where $s_z = \sqrt{1-|z|^2}$ with ${|z|^2= z \cdot \bar z}$. 
We remark that $T_z$ is an involution, i.e. $T_z \circ T_z = \text{Id}_{\mathbb{B}^N}$.

Let $A=(A_{jk}) := dT_{z} (z)$ and let
\begin{equation}\label{e}
	e_j := \sum_{k=1}^{N} A_{jk}dz_k.
\end{equation}
Then
$\{ e_j \}_{j=1}^{N}$ is an orthonormal frame of $T_{\mathbb B^N}^{*}$ with respect to the Bergman metric on $\mathbb{B}^N$ (see \cite{Lee_Seo}). Let $\{ X_j\}_{j=1}^N$ be the dual frame of $\{ e_j \}_{j=1}^{N}$ on $T_{\mathbb{B}^N}$
i.e.
$$
X_j = \sum_{k=1}^NA^{kj} \frac{\partial}{\partial z_k}
$$
where $( A^{kj})_{j,k=1}^N$ is the inverse matrix of $\left(A_{jk}\right)_{j,k=1}^N$.

Let $\widetilde e_1, \ldots, \widetilde e_n$ be a local orthonormal frame  on $T^*_M$. Then there exist locally defined smooth functions $b_{kl}$ such that
$$
\imath^{*} e_k = \sum_{l=1}^{n} b_{kl} \widetilde e_l.$$
Let $Y_1, \ldots, Y_n$ be the local dual frame of $\widetilde e_1, \ldots, \widetilde e_n$ on $T_M$. Then there exist locally defined smooth functions $y_{lj}$ on $M$ and $a_{lk}$ on $\imath(M)$ such that
\begin{equation}\label{YY}
Y_j = \sum_{l=1}^n {y_{lj}} (\zeta) \frac{\partial}{\partial \zeta_l}
\end{equation}
and
\begin{equation}\label{YX}
\imath_{*} Y_k = \sum_{l=1}^{N} ({a_{lk}} \circ \imath)(\zeta) X_l.
\end{equation}

\begin{lemma}\label{ab}
$b_{km} = a_{km} \circ \imath$
\end{lemma}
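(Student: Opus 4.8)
The plan is to evaluate the natural pairing of the one-form $\imath^{*}e_k$ with the vector field $Y_m$ in two ways: directly from the definition of the functions $b_{kl}$, and by first pushing $Y_m$ forward and then invoking the definition of the functions $a_{kl}$. The two ingredients I would use are the dual-frame relations $\langle \widetilde e_l, Y_m\rangle = \delta_{lm}$ on $T_M$ and $\langle e_k, X_l\rangle = \delta_{kl}$ on $T_{\mathbb B^N}$, together with the naturality of the pullback of one-forms, namely $\langle \imath^{*}\alpha, v\rangle = \langle \alpha, \imath_{*}v\rangle$ for $v\in T_\zeta M$ and $\alpha$ a one-form at $\imath(\zeta)$.

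First I would use $\imath^{*}e_k = \sum_{l=1}^{n} b_{kl}\widetilde e_l$ and pair with $Y_m$, obtaining
\[
\langle \imath^{*}e_k, Y_m\rangle = \sum_{l=1}^{n} b_{kl}\,\langle \widetilde e_l, Y_m\rangle = b_{km}.
\]
Then I would compute the same pairing by pushing $Y_m$ forward: by naturality and \eqref{YX},
\[
\langle \imath^{*}e_k, Y_m\rangle = \langle e_k, \imath_{*}Y_m\rangle = \sum_{l=1}^{N} (a_{ml}\circ\imath)\,\langle e_k, X_l\rangle = a_{mk}\circ\imath,
\]
the right-hand pairing being taken at the point $\imath(\zeta)$, so that $\langle e_k, X_l\rangle = \delta_{kl}$ there. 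Comparing the two displays would give $b_{km} = a_{mk}\circ\imath$.

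I do not expect any genuine obstacle here: the entire content of the lemma is the observation that, under the duality between pullback of covectors and pushforward of vectors, the matrix $(b_{kl})$ is the transpose of $(a_{lk}\circ\imath)$. The only point deserving a moment's care is the bookkeeping of base points — the left-hand pairing lives on $M$ at $\zeta$ while the right-hand one lives on $\mathbb B^N$ at $\imath(\zeta)$ — and this is precisely what the formulation \eqref{YX} is set up to handle.
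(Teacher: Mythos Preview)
Your proof is correct and follows essentially the same approach as the paper's: evaluate $\imath^{*}e_k$ on $Y_m$ two ways, once via the defining expansion $\imath^{*}e_k=\sum_l b_{kl}\widetilde e_l$ and once via naturality $\imath^{*}e_k(Y_m)=e_k(\imath_{*}Y_m)$ together with \eqref{YX}, and compare. The paper's version is just the one-line condensation of what you wrote.
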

\begin{proof}
Since one has 
$$
\imath^* e_k (Y_m) = \sum_l b_{kl} \widetilde e_l (Y_m)= b_{km}
$$ 
and 
$$
\imath^* e_k (Y_m) = e_k (\imath_{*} Y_m) = e_k \left( \sum a_{lm} \circ \imath  X_l \right) = a_{km}\circ \imath ,
$$
we obtain the lemma.
\end{proof}

\begin{lemma}\label{useful formula}
For each $\mu =1,\ldots, N,$
$$
\sum_{l=1}^{N} ( a_{lk} \circ \imath )(\zeta) A^{\mu l} = \sum_{l=1}^{n}  y_{lk}(\zeta)  \frac{\partial \imath_{\mu}}{\partial \zeta_l}.
$$
\end{lemma}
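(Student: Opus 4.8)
The plan is to compute the pushforward vector field $\imath_*Y_k$ in two different ways and then compare coefficients in the coordinate frame $\{\partial/\partial z_\mu\}_{\mu=1}^N$ of $T_{\mathbb B^N}$, which is pointwise a basis and hence gives a well-defined coefficient comparison.

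First I would use \eqref{YX} together with the description $X_l=\sum_{\mu}A^{\mu l}\,\partial/\partial z_\mu$ of the dual frame to write
\begin{equation*}
\imath_*Y_k=\sum_{l=1}^N (a_{kl}\circ\imath)(\zeta)\,X_l
=\sum_{\mu=1}^N\left(\sum_{l=1}^N (a_{kl}\circ\imath)(\zeta)\,A^{\mu l}\right)\frac{\partial}{\partial z_\mu}.
\end{equation*}
Second, I would use \eqref{YY} and the fact that the differential of $\imath$ sends the coordinate vector field $\partial/\partial\zeta_l$ on $M$ to $\sum_{\mu}(\partial\imath_\mu/\partial\zeta_l)\,\partial/\partial z_\mu$ (the Jacobian of $\imath$ in the coordinates $(\zeta_1,\dots,\zeta_n)$ and $(z_1,\dots,z_N)$), obtaining
\begin{equation*}
\imath_*Y_k=\imath_*\left(\sum_{l=1}^n y_{kl}(\zeta)\frac{\partial}{\partial\zeta_l}\right)
=\sum_{\mu=1}^N\left(\sum_{l=1}^n y_{kl}(\zeta)\,\frac{\partial\imath_\mu}{\partial\zeta_l}\right)\frac{\partial}{\partial z_\mu}.
\end{equation*}
Equating the two expressions and reading off the coefficient of $\partial/\partial z_\mu$ for each fixed $\mu$ yields exactly the claimed identity.

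There is essentially no serious obstacle here; the only points requiring care are bookkeeping ones — keeping the row/column convention of the inverse matrix $A^{\mu l}=(A^{-1})_{\mu l}$ consistent with the paper's convention $X_j=\sum_k A^{kj}\partial/\partial z_k$, and making sure the chain-rule expression for $\imath_*(\partial/\partial\zeta_l)$ uses $\imath_\mu=z_\mu\circ\imath$ as defined earlier. Once these are fixed, the statement is immediate from the linear independence of $\{\partial/\partial z_\mu\}$. (One could alternatively derive it by pairing $\imath_*Y_k$ against the one-forms $dz_\mu$, which is the same computation phrased dually.)
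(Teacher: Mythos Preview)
Your proposal is correct and follows essentially the same approach as the paper: compute $\imath_*Y_k$ once via \eqref{YX} and $X_l=\sum_\mu A^{\mu l}\partial/\partial z_\mu$, once via \eqref{YY} and the chain rule, and compare coefficients in the frame $\{\partial/\partial z_\mu\}$. The paper's proof is identical in structure and content.
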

\begin{proof}
Since we have
$$
\imath_* Y_k = \imath_* \left(\sum_{l=1}^n y_{lk} \frac{\partial}{\partial \zeta_l}\right)
= \sum_{l=1}^n \sum_{\mu=1}^N y_{lk} (\zeta) \frac{\partial \imath_{\mu}}{\partial \zeta_l} \frac{\partial}{\partial z_{\mu}} \bigg |_{z=\imath(\zeta)}
$$
and
$$
\sum_{l=1}^{N} (a_{lk} \circ \imath) (\zeta) X_l = \sum_{l=1}^{N} \sum_{\mu=1}^{N} (a_{lk} \circ \imath)(\zeta) A^{\mu l} \frac{\partial}{\partial z_{\mu}} \bigg |_{z=\imath(\zeta)},
$$
by \eqref{YX} the proof is completed.
\end{proof}

\subsection{Definition of formal series}

First, we note that $T_{\Sigma}^* $ is Griffiths positive. Since $\imath$ is an { embedding}, $\imath^* T_{\Sigma}^*$ is also Griffiths positive and so it is ample. Since $\imath^* (S^m T_{\Sigma}^*) \cong S^m (\imath^* T_{\Sigma}^*)$, we know that $\bigoplus_{m=0}^\infty H^0(M, \imath^* (S^m T_{\Sigma}^*) ) \cong \bigoplus_{m=0}^\infty H^0(M, S^m (\imath^* T_{\Sigma}^*) )  $ is infinite dimensional.

By the decomposition \eqref{decomposition} any symmetric differential $\psi \in H^0 (M, \imath^*(S^m T_\Sigma ^*) ) $ is of the form
$$
\psi= \sum_{\ell=0}^{m} \psi^{\ell}_{{m}}
$$
where $\psi^{\ell}_{{m}} \in H^0(M, {S^\ell T_{M}^* \otimes S^{m-\ell} N^*} )$.
Fix $\psi =\sum_{\ell=0}^{m_0} \psi_{m_0}^\ell \in H^0(M, \imath^* (S^{m_0} T^*_\Sigma ) )$. {For each $\ell=0,\ldots, m_0$, we define a sequence of vector {bundles} $\{ F_k^{\ell} \}_{k=0}^{\infty}$ by 
\begin{equation*}
F_{k}^{\ell}=
\begin{cases}
\imath^* (S^k T_{\Sigma}^*) & \text{ if } k < m_0, \\
S^{\ell+k-m_0} T_{M}^* \otimes S^{m_0-\ell} N^* & \text{ if } k \geq m_0.
\end{cases}
\end{equation*}
and consider {the sequence}

$$
\{\varphi^{\ell}_{k}\}_{k=0}^\infty \in \bigoplus_{k=0}^\infty
C^{\infty}(M, F_k^{\ell})$$
such that
\begin{equation}\label{cases}
\varphi^{\ell}_{k}=
\begin{cases}
0 & \text{ if } k < m_0, \\
\psi_{m_0}^{\ell} & \text{ if } k=m_0,\\
\text{the minimal solution of } \\\bar\partial_M \varphi_{k}^{\ell} = -(k-1) \mathcal R_G \varphi_{k-1}^{\ell}
& \text{ if } k>m_0.
\end{cases}
\end{equation}
The minimal solution of the equation
\begin{equation}\label{equation}
\bar \partial_{M} \varphi_{k}^{\ell} = - (k-1) \mathcal R_{G} \varphi_{k-1}^{\ell}
\end{equation}
exists by the following lemma for each $k$.
\begin{lemma}\label{existence}
For any  symmetric differential $\psi =\sum_{\ell=0}^{m_0} \psi_{m_0}^\ell\in H^{0}(M, \imath^* (S^{m_0} T_\Sigma^*) )$ and each $\ell =0,\ldots, m_0$, the sequence $\{\varphi_{k}^{\ell} \}_{k=0}^{\infty}$ given by \eqref{cases} is well defined and it satisfies
$$
\| \varphi_{m_0+m}^\ell \|^2 =  \bigg( \prod_{j=1}^{m} \left( \frac{(\ell+j)+(n-1)}{m_0+j}   \right) \bigg) \left( \frac{(m_0+\ell-1)!}{ \{ (m_0-1)!  \}^2}
\frac{ \{ (m_0+m-1)! \}^2} { (m_0+\ell+m-1)!}\frac{1}{m! } \right) \| \psi^{\ell}_{m_0} \|^2
$$
for any $m \geq 1$. Moreover for any $m \geq 0$, $\varphi_{m_0+m}^\ell$ satisfies
$$
\Box^{0}_{m_0+m,M} (\varphi_{m_0+m}^\ell) = (m^2 + (m_0+{\ell}-1)m) \varphi_{m_0+m}^\ell.
$$
\end{lemma}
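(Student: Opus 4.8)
The plan is to prove the two assertions of Lemma~\ref{existence} by induction on $k$, using the commutator identity \eqref{RG2} from Lemma~\ref{RG formula} as the engine. First I would verify that the recursion \eqref{equation} makes sense at each step: assuming $\varphi_{m_0+k-1}$ has been constructed and lies in $C^\infty(M,\imath^{-1}(S^{m_0+k-1}T^*_\Sigma))$, I need the right-hand side $-(m_0+k-1)\mathcal R_G\varphi_{m_0+k-1}$ to be $\bar\partial_M$-closed so that the $\bar\partial_M$-equation is solvable. Since $M$ is compact and $\imath^{-1}(S^{m}T_\Sigma^*)$ is ample (hence the cohomology vanishing for the relevant $\bar\partial$-complex holds in bidegree $(0,1)$ after twisting, by Nakano/Demailly vanishing), solvability and the existence of a unique minimal (i.e. $\bar\partial_M^*$-closed, orthogonal to $\ker\bar\partial_M$) solution follow from Hodge theory once closedness is checked. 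Closedness is where I would use Lemma~\ref{comm2} and the structure of $\mathcal R_G$: applying $\bar\partial_M$ to $\mathcal R_G\varphi_{m_0+k-1}$ and invoking the intertwining relations $\pi_k\circ\bar\partial_\Sigma=\bar\partial_M\circ\pi_{k-1}$ together with the recursion one level down, the $\bar\partial_M$-image telescopes to something proportional to $\mathcal R_G\bar\partial_M\varphi_{m_0+k-2}$, which again by the recursion is $\mathcal R_G\mathcal R_G(\cdots)$; the key point is that $\bar\partial_M\circ\mathcal R_G=-\mathcal R_G\circ\bar\partial_M$ up to the combinatorial scalar, so closedness propagates. (Here I am essentially re-deriving, in the relative setting over $M$, the formal computation that $\varphi_k$ is forced to be harmonic for the shifted Laplacian.)

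Next, for the eigenvalue equation, I would argue that $\varphi_{m_0+k}$ lies in the eigenspace $\ker(\Box^0_{m_0+k,M}-(k^2+2m_0k-k)I)$. The cleanest route: since $\varphi_{m_0+k}$ is the minimal solution it is $\bar\partial_M^*$-closed, so $\Box^0_{m_0+k,M}\varphi_{m_0+k}=\bar\partial_M^*\bar\partial_M\varphi_{m_0+k}=-(m_0+k-1)\bar\partial_M^*\mathcal R_G\varphi_{m_0+k-1}$. Now I want to commute $\bar\partial_M^*$ past $\mathcal R_G$. One does not have a clean commutator of $\bar\partial_M^*$ with $\mathcal R_G$ directly, so instead I would proceed by induction: assume $\Box^0_{m_0+k-1,M}\varphi_{m_0+k-1}=\lambda_{k-1}\varphi_{m_0+k-1}$ with $\lambda_{k-1}=(k-1)^2+2m_0(k-1)-(k-1)$, apply $\mathcal R_G$, and use \eqref{RG2} to get $\Box^1_{m_0+k,M}\mathcal R_G\varphi_{m_0+k-1}=(\lambda_{k-1}+2(m_0+k-1))\mathcal R_G\varphi_{m_0+k-1}$. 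Thus $\mathcal R_G\varphi_{m_0+k-1}$ is an eigenform of $\Box^1$ with eigenvalue $\mu_k:=\lambda_{k-1}+2(m_0+k-1)$, and a short check gives $\mu_k=k^2+2m_0k-k$. Since $\bar\partial_M$ and $\bar\partial_M^*$ commute with $\Box$, both $\bar\partial_M^*\mathcal R_G\varphi_{m_0+k-1}$ and hence $\varphi_{m_0+k}=-\frac{1}{(m_0+k-1)\mu_k}\bar\partial_M^*\mathcal R_G\varphi_{m_0+k-1}$ (when $\mu_k\neq 0$; note $\mu_k>0$ for $k\geq 1$ since $m_0\geq 0$ and actually $m_0\geq 1$ as $\psi\neq 0$) are $\Box^0$-eigenforms with the same eigenvalue $\mu_k=k^2+2m_0k-k$, as claimed. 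I should separately check the base case $k=0$, where $\varphi_{m_0}=\psi$ is holomorphic, so $\Box^0\psi=\bar\partial_M^*\bar\partial_M\psi=0$, matching $k^2+2m_0k-k=0$ at $k=0$.

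Finally, for the norm identity I would compute $\|\varphi_{m_0+m}\|^2$ iteratively. Writing $\varphi_{m_0+m}$ as the minimal solution, its norm is governed by the eigenvalue: from $\varphi_{m_0+m}=-\frac{1}{(m_0+m-1)}\,(\Box^1_{m_0+m,M})^{-1}\bar\partial_M(\cdot)$ restricted appropriately — more precisely, for the minimal solution $v$ of $\bar\partial_M v=f$ with $f$ a $\Box^1$-eigenform of eigenvalue $\mu$ one has $\|v\|^2=\frac{1}{\mu}\|f\|^2$ (since $v=\bar\partial_M^*G^1 f=\mu^{-1}\bar\partial_M^* f$ and $\|\bar\partial_M^* f\|^2=\langle\bar\partial_M\bar\partial_M^* f,f\rangle=\langle\Box^1 f,f\rangle=\mu\|f\|^2$ using $\bar\partial_M f=0$). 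Therefore $\|\varphi_{m_0+m}\|^2=\frac{(m_0+m-1)^2}{\mu_m}\|\mathcal R_G\varphi_{m_0+m-1}\|^2=\frac{(m_0+m-1)^2}{\mu_m}\cdot\frac{m_0+m-1+n-1+1}{m_0+m-1+1}\,\|\varphi_{m_0+m-1}\|^2$, wait — using \eqref{RG1}, $\|\mathcal R_G\varphi_{m_0+m-1}\|^2=\frac{(m_0+m-1)+n}{(m_0+m-1)+1}\|\varphi_{m_0+m-1}\|^2$. Substituting $\mu_m=m^2+2m_0m-m=m(m+2m_0-1)$ and telescoping the product over $m$, collecting the factors $\frac{(m_0+j-1)^2\,(m_0+j-1+n)}{j(2m_0+j-1)(m_0+j)}$ for $j=1,\dots,m$, and regrouping the $n$-dependent part as $\prod_{j=1}^m\bigl(1+\frac{n-1}{m_0+j}\bigr)$ and the rest into the stated factorial expression, yields the formula. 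The bookkeeping here is routine but must be done carefully; the genuinely substantive inputs are the closedness of the right-hand side of \eqref{equation} (needed for well-definedness) and the eigenvalue propagation, both of which rest entirely on Lemma~\ref{RG formula} and Corollary~\ref{eigenvalue}.

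I expect the main obstacle to be the well-definedness step — showing $\mathcal R_G\varphi_{k-1}$ is $\bar\partial_M$-closed at each stage — because it requires tracking how $\bar\partial_M$ interacts with $\mathcal R_G$ through the restriction/projection operators $\pi_j$ and is tangled with the recursion itself; once that induction loop is set up correctly, the eigenvalue and norm computations are forced.
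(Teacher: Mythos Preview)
Your eigenvalue and norm computations are correct and essentially match the paper's approach (which the paper outsources to Lemma~4.12 of \cite{Lee_Seo}, citing only Lemma~\ref{RG formula} and Corollary~\ref{eigenvalue} as inputs). The recursion $\|\varphi_{m_0+m}\|^2=\frac{(m_0+m-1)^2}{\mu_m}\cdot\frac{m_0+m-1+n}{m_0+m}\|\varphi_{m_0+m-1}\|^2$ with $\mu_m=m(m+2m_0-1)$ telescopes to the stated formula, exactly as you indicate. One small correction: you do not need ampleness or Nakano vanishing for solvability. Once $\mathcal R_G\varphi_{m_0+k-1}$ is known to be $\bar\partial_M$-closed \emph{and} a $\Box^1$-eigenform with eigenvalue $\mu_k>0$, the Hodge decomposition on the compact manifold $M$ already forces it to be $\bar\partial_M$-exact (it is orthogonal to harmonic forms), and the minimal solution is $\mu_k^{-1}\bar\partial_M^*(\,\cdot\,)$.

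The genuine gap is in the closedness step, which you rightly flag as the main obstacle but do not actually carry out. Your heuristic ``$\bar\partial_M\circ\mathcal R_G=-\mathcal R_G\circ\bar\partial_M$ up to scalar'' is not available: $\mathcal R_G$ involves multiplication by the local orthonormal frame $e_\ell$ of $T^*_\Sigma$, and this frame is \emph{not} holomorphic. When you apply $\bar\partial_M$ to $\mathcal R_G\varphi_{m_0+k-1}=\sum_\ell\varphi_{m_0+k-1}\,(e_\ell\circ\imath)\otimes\overline{\imath^*e_\ell}$, you get two pieces. The first, coming from $\bar\partial_M$ hitting the coefficients $f_I$, does vanish by the recursion together with the symmetry/antisymmetry trick you have in mind (symmetric in $e_\ell e_m$, antisymmetric in $\overline{\imath^*e_\ell}\wedge\overline{\imath^*e_m}$). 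But there is a second piece, coming from $\bar\partial_M$ hitting the frame factors $(A_{\ell\mu}\circ\imath)\overline{(A_{\ell j}\circ\imath)}\,\overline{\partial\imath_j/\partial\zeta_k}$, which is not governed by the recursion at all. The paper handles this second piece by a direct local computation (its equation~\eqref{equation2}), showing it vanishes thanks to a specific identity for the derivatives of the matrix $A=dT_z(z)$ on $\mathbb B^N$---a feature of the Bergman metric, imported from \cite{Lee_Seo}. This is not visible from the abstract intertwining relations with the $\pi_j$, and your sketch does not account for it.
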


\begin{proof}
We will use induction with respect to the index $k$.   If $k\leq m_0$,    then  \eqref{cases}  holds   trivially. 
Suppose that there is the minimal solution of \eqref{equation} for any $k \leq m_0+m-1$. First we will show that $\mathcal R_G(\varphi^\ell_{m_0 + m-1})$ is $\bar\partial_M$-closed. 
{Take a point $q \in M$ and small open set $q \in U \subset {M}$ such that $U \cong \imath (U) \subset \Sigma$. 
Let $(z_1,\cdots, z_N)$ be a local coordinate system at $p :=\imath(q) \in \imath(U)$ such that 
$$
\imath(U) = \{(z_1,\cdots, z_N): z_{n+1}=\cdots = z_N=0 \} \quad \text{near} \quad p=(0,\cdots,0).
$$
Then under the identification $U \cong \imath(U)$, $\{dz^L \}$ becomes a holomorphic frame on $(S^{\ell+m-1} T_{M}^{*}) |_{U} $ and $\{dz^J \}$ becomes a holomorphic frame on $(S^{m_0-\ell} N^*)|_{U}$, where $L:=(i_1,\ldots,i_n)$ and $J:=(i_{n+1},\ldots,i_N)$.}
Write
$$
\varphi_{m_0+m-1}^\ell = \sum_{\substack{|L|=\ell+m-1 \\ |J|=m_0-\ell}} \varphi_{LJ}^\ell dz^{L}\otimes dz^{J}.
$$
Then we obtain
\begin{equation*}
\begin{aligned}
\bar \partial_M \varphi_{m_0+m-1}^\ell &= \sum_{j=1}^{n} \sum_{\substack{|L|=\ell+m-1 \\ |J|=m_0-\ell}} \overline Y_{j} \varphi_{LJ}^\ell dz^L \otimes dz^J \otimes \overline{\widetilde e_j }
\end{aligned}
\end{equation*}
and by \eqref{equation}
\begin{equation*}
\begin{aligned}
\bar \partial_M \varphi_{m_0+m-1}^\ell
&= - (m_0+m-2) \sum_{\mu=1}^{n} \varphi_{m_0+m-2}^\ell (e_\mu \circ \imath) \otimes \overline{\imath^{*} e_\mu} \\
&= -(m_0+m-2) \sum_{\mu,j=1}^{n} \varphi^\ell_{m_0+m-2} (e_\mu \circ \imath) \overline{b}_{\mu j} \otimes \overline{ \widetilde e_j}.
\end{aligned}
\end{equation*}
As a result, for each $j=1,\ldots, n$,
\begin{equation}\label{d-bar_1}
\sum_{\substack{|L|=\ell+m-1 \\ |J|=m_0-\ell}} \overline{Y_j} \varphi_{LJ}^\ell dz^I\otimes dz^J  = -(m_0+m-2) \sum_{\mu=1}^{n} \varphi_{m_0+m-2}^\ell \overline{b}_{\mu j} (e_\mu \circ \imath).
\end{equation}
Since we have
\begin{equation*}
\begin{aligned}
& \bar \partial_M \mathcal R_{G} (\varphi_{m_0+m-1}^\ell) = \bar \partial_{M} \left(\sum_{\mu=1}^{n} \varphi_{m_0+m-1}^\ell (e_\mu \circ \imath) \otimes \overline{\imath^{*} e_\mu } \right) \\
&= \sum_{\mu,j,L,J}  \left( \overline {Y_j} \varphi_{LJ}^\ell dz^{L} (e_\mu \circ \imath) \otimes dz^J  \otimes \overline{\tilde e_j}  \wedge  \overline{ \imath^* e_\mu}  \right)+ \sum_{\tau, j, \mu, s} \varphi_{m_0+m-1}^\ell dz_{\tau} \otimes \bar \partial_{M} \left( (A_{\mu\tau} \circ \imath) \overline{(A_{\mu j} \circ \imath)} \overline{\frac{\partial {\imath}_{j}}{\partial \zeta_s}} \right) \wedge d \bar \zeta_s,
\end{aligned}
\end{equation*}
\begin{equation*}
\begin{aligned}
&\sum_{\mu,j} \sum_{\substack{|L|=\ell+m-1 \\ |J|=m_0-\ell}} \overline {Y_j} \varphi_{LJ}^\ell dz^{L} (e_\mu \circ \imath) \otimes dz^{J}  \otimes \overline{\tilde e_j} \wedge  \overline{ \imath^* e_\mu}  \\
&= -(m_0+m-2) \sum_{\mu,j,{\eta}} \bar b_{{\eta} j} \varphi_{m_0+m-2}^\ell  (e_\mu \circ \imath) (e_{{\eta}} \circ \imath) \otimes \overline{\tilde e_j} \wedge  \overline{ \imath^* e_\mu} \\
&= -(m_0+m-2) \sum_{\mu,{\eta}} \varphi_{m_0+m-2}^\ell (e_\mu \circ \imath) (e_{{\eta}} \circ \imath) \overline{\imath^{*} e_{{\eta}}} \wedge \overline{\imath^* e_\mu}
=0
\end{aligned}
\end{equation*}
by \eqref{d-bar_1}, and
\begin{equation}\label{equation2}
\begin{aligned}
&\sum_{j,\mu,s} \bar \partial_{M} \left( (A_{\mu\tau} \circ \imath) \overline{(A_{\mu j} \circ \imath)} \overline{\frac{\partial {\imath}_{j}}{\partial \zeta_s}} \right) \wedge d \bar \zeta_s \\
&= \sum_{j,\mu,s} \bar \partial_{M} \left( (A_{\mu \tau} \circ \imath) \overline{(A_{\mu j} \circ \imath)} \right)  \wedge \overline{\frac{\partial \imath _j } {\partial \zeta_s }} d \bar \zeta_s + \sum_{j,\mu,{\eta},s} (A_{\mu \tau} \circ \imath) \overline{(A_{\mu j} \circ \imath)} \overline{\frac{\partial^2 {\imath}_j}{\partial \zeta_{{\eta}} \partial \zeta_s}} d \bar \zeta_{{\eta}} \wedge d \bar \zeta_s \\
&= \sum_{j,\mu,{\eta},\sigma,s} \left( \frac{\partial A_{\mu \tau}}{\partial \bar z_{{\eta}}} \overline{(A_{\mu j} \circ \imath)} + (A_{\mu \tau} \circ \imath) \frac{\partial \bar A_{\mu j}}{ \partial \bar z_{{\eta}}} \right) \overline{\frac{\partial \imath_{{\eta}}}{\partial \zeta_{\sigma}}} d \bar \zeta_{\sigma} \wedge \overline{\frac{\partial \imath_j}{\partial \zeta_s}} d \bar \zeta_s \\
&= \sum_{j,\mu,{\eta}} \left( \frac{\partial A_{\mu \tau}}{\partial \bar z_{{\eta}}} \overline{(A_{\mu j} \circ \imath)} + (A_{\mu \tau} \circ \imath) {\frac{\partial \overline{A}_{\mu j}}{ \partial \overline z_{{\eta}}}} \right) \overline{d \imath_{{\eta}} } \wedge \overline {d \imath_{j} }=0,
\end{aligned}
\end{equation}
one has $\bar \partial_M \mathcal{R}_{G} (\varphi^\ell_{m_0+m-1} )=0$.
Here the last equality in \eqref{equation2} holds by the same argument given in the proof of Lemma 4.12 in \cite{Lee_Seo}.

Now we claim that $\varphi_{m_0+k}^{\ell}, ~ k \leq m-1$ is an eigenfunction of $\Box_{m_0+{k},M}^{0,\ell+{k}}$. 
Denote $E_{m_0,{k}}^{\ell}$ be its eigenvalue. 
Since $\varphi_{m_0}^{\ell} \in H^0(M, {S^\ell T^*_{M}\otimes S^{m_0-\ell}N^*})$, 
one has $E_{m_0,0}^\ell=0$. 
Assume that $\varphi_{m_0+k}^\ell$ is an eigenvector of $\Box_{m_0+k, M}^{0,\ell+{k}}$ for some $k\geq 0$. By \eqref{RG2} and self-adjointness of ${\Box_{m_0+k+1,M}^{1,\ell+{k+1}}}$, we know 
\begin{equation}\label{perp}
\mathcal{R}_{G}(\varphi_{m_0+k}^\ell) \perp \ker \Box_{m_0+k+1,M}^{1,\ell+{k+1}}.
\end{equation}
Moreover, by Corollary~\ref{eigenvalue} and \eqref{perp}, we obtain
\begin{equation}\nonumber
\mathcal{R}_{G} (\varphi_{m_0+k}^{\ell}) 
= \Box^{1,\ell+{k+1}}_{m_0+k+1,M} G^1 \mathcal{R}_{G} (\varphi_{m_0+k}^{\ell})
= G^1  \mathcal{R}_{G}\big( (E_{m_0,k}^{\ell} + (\ell+k) + (m_0+k) ) \varphi_{m_0+k}^{\ell} \big)
\end{equation}
and by properties of the Green operator $G^1$, it follows that
$$
\Box^{0,\ell+{k+1}}_{m_0+k+1,M}(\varphi_{m_0+k+1}^{\ell})= (E_{m_0,k}^{\ell} + (\ell+k) + (m_0+k)) \varphi_{m_0+k+1}^{\ell}.
$$
The eigenvalue of $\varphi^\ell_{m_0+k+1}$ for $\Box^{0,\ell+{k+1}}_{m_0+k+1,M}$ is 
$$
E_{m_0,k+1}^{\ell}=E_{m_0,k}^{\ell}+(\ell+k)+(m_0+k).
$$
Hence
\begin{equation}\label{eigenvalue2}
\begin{aligned}
E_{m_0,k}^{\ell}&=(\ell+(\ell+1)+\cdots+(\ell+k-1))+(m_0+(m_0+1)+ \cdots+ (m_0+k-1)) \\
&=\frac{k(2m_0+k-1)}{2} + \frac{k(2\ell+k-1)}{2}.
\end{aligned}
\end{equation}
Now we will show that \eqref{equation} has a solution when $k=m$. By the Hodge decomposition, the solvability of \eqref{equation} follows by \eqref{perp}. 
By \eqref{RG1}, \eqref{equation}, and Corollary~\ref{eigenvalue}, we have 
\begin{equation*}
\begin{aligned}
\| \varphi_{m_0+m}^\ell \|^2 &= (m_0+m-1)^2 \langle \langle \bar \partial^* G^1 \mathcal{R}_{G} \varphi_{m_0 +m-1}^\ell, \bar \partial^* G^1 \mathcal{R}_{G}  \varphi_{m_0+m-1}^\ell \rangle \rangle \\
&= \frac{(m_0+m-1)^2}{E_{m_0,m-1}^{\ell} + (\ell+m-1)+(m_0+m-1)} \| \mathcal{R}_{G} \varphi_{m_0 +m-1}^{\ell} \|^2 \\
&=\frac{(\ell+m-1)+n}{(m_0+m-1)+1}\frac{(m_0+m-1)^2}{E_{m_0,m}^{\ell}}  \| \mathcal{R}_{G} \varphi^{\ell}_{m_0+m-1} \|^2.
\end{aligned}
\end{equation*}
Therefore, 
\begin{equation*}
\begin{aligned}
\|\varphi_{m_0+m}^\ell \|^2 &= \left( \prod_{j=1}^{m} \left( \frac{(\ell+j-1)+n}{(m_0+j-1)+1} \right)   \frac{2(m_0+m-j)^2}{(m-j+1)((2m_0+m-j)+(2\ell+m-j))} \right) \| \varphi_{m_0}^\ell \|^2 \\
&= \left(  \prod_{j=1}^m \bigg( \frac{\ell+j+(n-1) }{m_0+j} \bigg) \right) \frac{ (m_0+{\ell}-1)! \{(m_0+m-1)!\}^2 }{\{(m_0-1)!\}^2 m! (m_0+\ell+m-1)!} \| \varphi_{m_0}^\ell \|^2. 
\end{aligned}
\end{equation*}
\end{proof}

For nonnegative integer $k$ define 
$\varphi_k \in C^\infty(M, \imath^*(S^k T_{\Sigma}^*) ) \cong \bigoplus_{\mu=0}^k
C^\infty(M, {S^\mu T^*_{M} \otimes S^{k-\mu}N^*})$  and $\varphi\in \bigoplus_{k=0}^\infty C^\infty(M, \imath^*(S^kT_\Sigma^*))$ by
\begin{equation}\label{associated}
{\varphi_k := 0 \quad \text{ for } k < m_0,} \quad 
\varphi_k := \sum_{\ell=0}^{m_0} \varphi^{\ell}_{m_0+(k-m_0)}\quad
\text{ for } k \geq {m_0},\quad
\text{ and } \quad
\varphi := \sum_k\varphi_k
\end{equation}
where  $\varphi^\ell_{m_0+(k-m_0)} \in  {C^\infty \big(M, {S^{\ell+(k-m_0)} T_{M}^* \otimes S^{m_0-\ell}N^{*}} \big)}$. 
Using the frame $e=(e_1,\ldots, e_N)$ given in \eqref{e}, {we write} 
\begin{equation}\label{associated2}
\varphi_k(\zeta) {=} \sum_{|I| = k} f_{I} (\imath (\zeta))  e^I|_{\imath (\zeta)}
\end{equation}
for $\zeta\in M$.

\begin{lemma}\label{norm relation}
In the above setting, the following identity holds:
\begin{equation*}
\| \varphi_k \|^2 = \sum_{\ell=0}^{m_0} \| \varphi_{m_0+(k-m_0)}^{\ell} \|^2 \quad {\text{ for } k \geq m_0}.
\end{equation*}
\end{lemma}
\begin{proof}
Let $\{U_{\alpha} \}$ be a finite open cover of $\imath(M)$ in $\Sigma$ satisfying that $T_{\Sigma}^*$ on $U_{\alpha}$ has a local orthonormal frame $\{ e_1, \cdots, e_N \}$ such that $g=\sum_{\mu=1}^{N} e_\mu \otimes \bar e_\mu$ and {$h= \sum_{\mu=1}^{n}\imath^* e_\mu \otimes \imath^*\bar e_\mu$.} Write $L=(i_1,\cdots,i_n)$, $J=(i_{n+1}, \cdots, i_N)$ and $e^L=e_1^{i_1}\cdots e_n^{i_n}$, $e^J=e_{n+1}^{i_{n+1}} \cdots e_N^{i_N}$ accordingly. Then $\{ e^L \otimes e^J\}_{\{|L|=\ell+(k-m_0), |J|=m_0-\ell \}}$ becomes a local orthonormal frame of $S^{\ell+(k-m_0)} T_{M}^* \otimes S^{m_0-\ell} N^*$ on $\imath^{-1}(U_{\alpha})$, and locally
\begin{equation*}
\varphi_{k}^{\ell}(\zeta) = \sum_{\substack{|{L}|=\ell+(k-m_0) \\ |J|=m_0-\ell}}  f_{LJ} (\imath(\zeta)) e^L \big|_{\imath(\zeta)} \otimes e^J \big|_{\imath(\zeta)}
\end{equation*}
by \eqref{associated}, where $f_{LJ}$ is a smooth function on $U_{\alpha}$. Since $\imath$ is an embedding, $\{ \imath^{-1} (U_{\alpha}) \}$ becomes a finite open cover of $M$. Let $\{ \chi_{\alpha} \}$ be a partition of unity subordinate to $\{ \imath^{-1} (U_{\alpha}) \}$. 
Then
\begin{equation*}
\begin{aligned}
\| \varphi_k \|^2 = \int_M \langle \varphi_k ,\varphi_k \rangle dV_M &=\sum_{\alpha} \sum_{\ell=0}^{m_0} \sum_{\substack{|L|=\ell+(k-m_0) \\ |J|=m_0-\ell}}\int_{\imath^{-1} (U_{\alpha})}  \chi_{\alpha}  |(f_{LJ} \circ \imath )(\zeta) |^2 \langle e^L , e^L \rangle \langle e^J, e^J \rangle dV_M \\
&=\sum_{\alpha} \sum_{\ell=0}^{m_0}   \sum_{\substack{|L|=\ell+(k-m_0) \\ |J|=m_0-\ell}}  \frac{L! J!}{k!}  \int_{\imath^{-1} (U_{\alpha})} \chi_{\alpha}  |(f_{LJ} \circ \imath) (\zeta) |^2 dV_M.
\end{aligned}
\end{equation*}
Since 
$$
\|\varphi_k^\ell \|^2 = \sum_{\alpha} \sum_{\substack{|L|=\ell+(k-m_0) \\ |J|=m_0-\ell}} \frac{L! J!}{k!} \int_{\imath^{-1} (U_{\alpha})} \chi_{\alpha}  |(f_{LJ} \circ \imath)(\zeta)|^2 dV_M,
$$
the proof is completed.
\end{proof}
{By using \eqref{associated2}, we define a formal sum $f$ on {$\widetilde M\times \mathbb B^N$ } by}

\begin{equation}\label{f}
f(\zeta,w) :=  \sum_{|I|= 0}^\infty  f_{I} (\imath(\zeta)) (T_{\imath(\zeta)} w)^I.
\end{equation}

In view of Lemma~\ref{invariant} below, we may consider $f$ as a function on $\Omega := M\times_\rho \mathbb B^N$.

\begin{lemma}\label{invariant}
$f(\zeta,w)$ is $\Gamma$-invariant, i.e. $f(\gamma \zeta, \rho(\gamma)w) = f(\zeta,w)$ for all $\gamma \in \Gamma$, $\zeta \in {\widetilde M}$ and $w\in \mathbb B^N$.
\end{lemma}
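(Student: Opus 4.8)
The plan is to verify $\Gamma$-invariance by a direct computation that tracks how each ingredient of the formal sum transforms under the action $\gamma\colon(\zeta,w)\mapsto(\gamma\zeta,\rho(\gamma)w)$. The key structural fact I would invoke is that $\rho(\gamma)\in\mathrm{Aut}(\mathbb B^N)$ and $\imath$ is $\rho$-equivariant, so $\imath(\gamma\zeta)=\rho(\gamma)\imath(\zeta)$. The term $(T_{\imath(\zeta)}w)^I$ involves the canonical involution $T_z$ and the orthonormal frame $e_j=\sum_k A_{jk}dz_k$ with $A=dT_{\imath(\zeta)}(\imath(\zeta))$, and the coefficients $f_I$ are the components of $\varphi_\ell$ in this frame; so the real content is a compatibility between the automorphism $\rho(\gamma)$ of $\mathbb B^N$ and the frame $\{e_j\}$ used to expand the symmetric differentials.

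First I would record the transformation rule for the map $w\mapsto T_{\imath(\zeta)}w$. For $g\in\mathrm{Aut}(\mathbb B^N)$ and $z\in\mathbb B^N$, the automorphisms $T_{gz}\circ g$ and $T_z$ differ by a unitary rotation $U_{g,z}\in U(N)$ fixing the origin: concretely $T_{gz}\circ g=U_{g,z}\circ T_z$, where $U_{g,z}$ is (up to the standard identification) the derivative cocycle of $g$ suitably normalized. Applying this with $g=\rho(\gamma)$ and $z=\imath(\zeta)$, and using $\imath(\gamma\zeta)=\rho(\gamma)\imath(\zeta)$, gives
\begin{equation}\nonumber
T_{\imath(\gamma\zeta)}(\rho(\gamma)w)=T_{\rho(\gamma)\imath(\zeta)}(\rho(\gamma)w)=U_{\rho(\gamma),\imath(\zeta)}\bigl(T_{\imath(\zeta)}w\bigr).
\end{equation}
Next I would check that the frame transforms by the same unitary: from \eqref{e}, $e_j|_{\imath(\gamma\zeta)}$ and $e_j|_{\imath(\zeta)}$ are related through $U_{\rho(\gamma),\imath(\zeta)}$ because $A=dT_z(z)$ transforms accordingly under $z\mapsto\rho(\gamma)z$; equivalently, pulling back the orthonormal coframe along $\rho(\gamma)$ and along the frame construction at the two points agree up to $U_{\rho(\gamma),\imath(\zeta)}$. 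Consequently the symmetric differential $\varphi_\ell$, being a well-defined section of $\imath^{-1}(S^\ell T^*_\Sigma)$ (here one uses that $\Sigma=\mathbb B^N/\rho(\Gamma)$, so sections over $M$ descend and the $f_I\circ\imath$ are the frame components), satisfies the matching cocycle identity: writing $\varphi_\ell=\sum_{|I|=\ell}(f_I\circ\imath)(e|_{\imath(\zeta)})^I$, one gets $\sum_I f_I(\imath(\gamma\zeta))(v)^I=\sum_I f_I(\imath(\zeta))(U_{\rho(\gamma),\imath(\zeta)}^{-1}v)^I$ for any vector $v$, since both sides compute the value of the same intrinsic symmetric tensor at corresponding points.

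Finally I would combine the two displays: substituting $v=T_{\imath(\gamma\zeta)}(\rho(\gamma)w)=U_{\rho(\gamma),\imath(\zeta)}(T_{\imath(\zeta)}w)$ into the frame-transformation identity for $\varphi_\ell$, the two copies of $U_{\rho(\gamma),\imath(\zeta)}$ cancel, yielding
\begin{equation}\nonumber
\sum_{|I|=\ell}f_I(\imath(\gamma\zeta))\bigl(T_{\imath(\gamma\zeta)}(\rho(\gamma)w)\bigr)^I=\sum_{|I|=\ell}f_I(\imath(\zeta))\bigl(T_{\imath(\zeta)}w\bigr)^I,
\end{equation}
and summing over $\ell$ gives $f(\gamma\zeta,\rho(\gamma)w)=f(\zeta,w)$ as formal series. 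I expect the main obstacle to be pinning down the cocycle $U_{\rho(\gamma),\imath(\zeta)}$ precisely and proving the two compatibility identities (for $T$ and for the frame $\{e_j\}$) with consistent normalizations — this is the place where the explicit form of $T_z$, the definition \eqref{e} of $e_j$, and the $\rho$-equivariance of $\imath$ all have to be reconciled; once that is in place the cancellation is purely formal. An alternative, possibly cleaner, route avoids naming $U$ at all: observe that $f(\zeta,w)$ is by construction a holomorphic function on $\widetilde M\times\mathbb B^N$ whose value is the contraction of the intrinsic symmetric differential $\varphi$ at $\zeta$ against the $\ell$-th symmetric power of the vector $T_{\imath(\zeta)}w\in T_{\imath(\zeta)}\mathbb B^N$ (via the metric identification), and both $\varphi$ (pulled back from $\Sigma$) and the assignment $(\zeta,w)\mapsto T_{\imath(\zeta)}w$ are manifestly equivariant, so the pairing is $\Gamma$-invariant; I would present the frame computation as the rigorous incarnation of this conceptual argument.
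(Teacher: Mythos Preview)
Your proposal is correct and follows essentially the same approach as the paper: both arguments hinge on the unitary cocycle identity $T_{\rho(\gamma)\imath(\zeta)}\circ\rho(\gamma)=U_\zeta\circ T_{\imath(\zeta)}$, the matching transformation law $\rho(\gamma)^*e|_{\imath(\zeta)}=U_\zeta\,e|_{\imath(\zeta)}$ for the orthonormal coframe, and the $\rho(\Gamma)$-invariance of the symmetric differentials $\varphi_\ell$, after which the two copies of $U_\zeta$ cancel. The paper makes the cocycle explicit by differentiating the transformation rule at $w=\imath(\zeta)$, which is exactly the step you flagged as the main obstacle.
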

\begin{proof}
Fix $\gamma \in \Gamma$.
There exists a unitary matrix $U_{\zeta}$ depending only on $\zeta$ satisfying \begin{equation}\label{transformation rule}
T_{\rho(\gamma)(\imath(\zeta))} \rho(\gamma) w = U_{\zeta} T_{\imath (\zeta)}w.
\end{equation}
Since $\varphi\in \bigoplus_{k=0}^\infty H^0 \big(M, \imath^*(S^{k}T^*_\Sigma ) \big) \cong \bigoplus_{k=0}^{\infty} H^0 \big( \imath(M), S^{{k}} T_{\Sigma}^* |_{{\imath (M)}} \big)$ and $\imath (M) \cong \imath ({\widetilde M}) / \rho(\Gamma)$,
we have $\rho(\gamma)^* \varphi_k = \varphi_k$. Note that
\begin{equation}\nonumber
\rho(\gamma)^* e_j = \sum_k  A_{jk}\circ\rho(\gamma) d(\rho \circ \gamma)_k
= \sum_{k,m,l} A_{jk}\circ\rho(\gamma) \frac{\partial (\rho \circ \gamma)_k }{\partial {\zeta}_l} A^{lm} e_m
\end{equation}
where $(A^{lm})$ denotes the inverse matrix of $A$, i.e. for {$e=(e_1,\ldots, e_N)$}
 \begin{equation*}
{\rho(\gamma)^* e\,|_{\imath(\zeta)}= \left(A\circ\rho(\gamma ) d\rho(\gamma) A^{-1}\right)^* e\,|_{\imath(\zeta)} = U_{\zeta}^* e |_{\imath(\zeta)}.}
\end{equation*}
This implies
\begin{equation}\label{invariance}
\begin{aligned}
 \sum_{|I| = 0}^\infty f_{I}(\imath(\zeta)) e^I
= \sum_{|I| = 0}^\infty f_{I}( \rho(\gamma) \imath(\zeta)) {\rho(\gamma)^*(e^I)}
= \sum_{|I| = 0}^\infty f_{I}( \rho(\gamma) \imath(\zeta)) {U_{\zeta}^*(e^I)}
\end{aligned}
\end{equation}
where $\rho(\gamma)^*$ and $U_\zeta^*$ are understood as the pull-back of symmetric differential forms. Hence by \eqref{transformation rule} and \eqref{invariance} we have
\begin{equation}\nonumber
\begin{aligned}
f(\gamma \zeta, \rho(\gamma) w)
&=  \sum_{|I| = 0}^\infty f_{I}(\imath(\gamma \zeta)) (T_{\imath(\gamma \zeta)} \rho(\gamma) w )^I
= \sum_{|I| = 0}^\infty f_{I}(\rho(\gamma)\imath( \zeta)) (T_{\rho(\gamma)\imath(\zeta)} \rho(\gamma) w )^I\\
&=  \sum_{|I| = 0}^\infty f_{I}(\rho(\gamma)\imath(\zeta)) (U_{\zeta} T_{\imath(\zeta)}w)^I
=\sum_{|I| = 0}^\infty f_{I}(\imath(\zeta)) ( T_{\imath(\zeta)}w)^I
= f(\zeta,w).
\end{aligned}
\end{equation}
\end{proof}

\subsection{$L^2$ convergence of formal series}

Let $\Omega:= M \times_{\rho} \mathbb{B}^N$ and $K\colon \mathbb{B}^N \times \overline{\mathbb{B}^N} \rightarrow \mathbb{C}$ be the (normalized) Bergman kernel on $\mathbb{B}^N$, i.e.
\begin{equation*}
K(z,w) = \frac{1}{(1- z \cdot \overline w )^{N+1}}
\end{equation*}
where $z \cdot \overline w = \sum_{i=1}^{N} z_i \overline{w_i} $.
We define a K\"ahler form $\omega$ on $\Omega$ by
\begin{equation}\nonumber
\omega|_{[\zeta, w]} = {\widetilde H}  +  \frac{\sqrt{-1}}{N+1} \partial \bar \partial \log K(w,w)
\end{equation}
with the K\"ahler form {$\widetilde H$ for $(\widetilde M, \imath^* g_{\mathbb{B}^N})$}, {where $\imath^* g_{\mathbb{B}^N}$ is the pull-back metric on $\widetilde M$ of the normalized Bergman metric $g_{\mathbb{B}^N}$ of $\mathbb{B}^N$.}
One can check that $\omega$ is an $(1,1)$ form on $M \times_{\rho} \mathbb B^N$. We define the volume form on $\Omega$ by {$dV_{\omega} = \frac{1 }{(N+n)!}\omega^{N+n} $.}
Then
\begin{equation}\label{volume}
dV_\omega = {  \sqrt{-1}^N} K(w,w) {\widetilde{H}^n} \wedge dw \wedge d\overline w,
\end{equation}
where $dw:= dw_1 \wedge \cdots \wedge dw_{{N}}$.
Now for measurable sections $f_1$, $f_2$ on $\Lambda^{p,q} T_{\Omega}^{*}$ and $\alpha > -1$,
we set
\begin{equation}\nonumber
\langle \langle f_1,f_2 \rangle \rangle_{\alpha} : = c_{\alpha} \int_{\Omega} \langle f_1, f_2 \rangle_{\omega} \delta^{\alpha+N+1} \,dV_\omega
\end{equation}
where $c_{\alpha} = \frac{\Gamma(N+\alpha+1)}{ \Gamma(\alpha+1) N! }$ and $\delta = 1-|T_{\imath (\zeta)} w|^2$.
\begin{lemma}\label{inner product}
If $f_1, f_2$ are measurable sections on $\Lambda^{p,q} T_{\Omega}^{*}$, then
\begin{equation*}
\langle \langle f_1, f_2 \rangle \rangle_{\alpha} 
= {
c_\alpha {\sqrt{-1}^N}
}\int_{\Omega} \langle f_1, f_2 \rangle_{\omega} \delta^{\alpha} \frac{|K(w, \imath (\zeta))|^2}{K(\imath (\zeta), \imath (\zeta))} {{\widetilde{H}}}^n \wedge dw \wedge d \bar w.
\end{equation*}
\end{lemma}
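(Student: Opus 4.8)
The plan is to reduce the whole statement to the single pointwise identity
\[
\delta^{N+1}\, K(w,w) \;=\; \frac{|K(w,\imath(\zeta))|^2}{K(\imath(\zeta),\imath(\zeta))},
\]
after which the lemma is immediate: writing $\delta^{\alpha+N+1}=\delta^{\alpha}\cdot\delta^{N+1}$ and substituting this together with the explicit volume form \eqref{volume} into the definition of $\langle\langle\,\cdot\,,\,\cdot\,\rangle\rangle_\alpha$ turns $c_\alpha\int_\Omega\langle f,h\rangle_\omega\,\delta^{\alpha+N+1}\,dV_\omega$ into exactly the right-hand side, with the constant $c_\alpha$ and the factor $\left(\tfrac{\sqrt{-1}}{2}\right)^{N+n}H^n\wedge dw\wedge d\bar w$ carried through verbatim. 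The form-degree $(p,q)$ plays no role, since $\langle f,h\rangle_\omega$ is just the pointwise inner product with respect to $\omega$ and is untouched by the computation.

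To prove the pointwise identity I would invoke the standard transformation formula for the involution $T_a$ of $\mathbb B^N$ (e.g. Rudin, \emph{Function Theory in the Unit Ball of $\mathbb C^N$}, Theorem 2.2.2): for all $z,w\in\mathbb B^N$,
\[
1 - T_a(z)\cdot\overline{T_a(w)} \;=\; \frac{(1-|a|^2)(1-z\cdot\bar w)}{(1-z\cdot\bar a)(1-a\cdot\bar w)}.
\]
Specializing to $z=w$ and $a=\imath(\zeta)$ gives
\[
\delta \;=\; 1-|T_{\imath(\zeta)}w|^2 \;=\; \frac{(1-|\imath(\zeta)|^2)(1-|w|^2)}{|1-w\cdot\overline{\imath(\zeta)}|^{2}}.
\]
Raising to the power $N+1$, multiplying by $K(w,w)=(1-|w|^2)^{-(N+1)}$, and recalling $K(z,w)=(1-z\cdot\bar w)^{-(N+1)}$, the factor $(1-|w|^2)^{N+1}$ cancels and one is left with $\dfrac{(1-|\imath(\zeta)|^2)^{N+1}}{|1-w\cdot\overline{\imath(\zeta)}|^{2(N+1)}}$, which is precisely $|K(w,\imath(\zeta))|^2/K(\imath(\zeta),\imath(\zeta))$.

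I do not expect a genuine obstacle. The only point needing a word of care is well-definedness on the quotient $\Omega$: each of $\delta$, $|K(w,\imath(\zeta))|^2/K(\imath(\zeta),\imath(\zeta))$, and $dV_\omega$ must be $\Gamma$-invariant so that the integrals over $\Omega$ are meaningful. For $\delta$ this follows at once from the transformation rule \eqref{transformation rule}, $T_{\rho(\gamma)\imath(\zeta)}\rho(\gamma)w = U_\zeta T_{\imath(\zeta)}w$ with $U_\zeta$ unitary, so that $|T_{\imath(\zeta)}w|^2$ is invariant; invariance of the ratio of Bergman kernels then follows from the identity just proved together with invariance of $dV_\omega$ (already recorded in \eqref{volume}), and hence the equality of the two integrals holds as an identity of $\Gamma$-invariant densities.
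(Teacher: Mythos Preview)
Your proposal is correct and follows essentially the same route as the paper: the authors' proof simply invokes the volume formula \eqref{volume} together with the identity $1-|T_z w|^2 = \frac{(1-|z|^2)(1-|w|^2)}{|1-z\cdot\bar w|^2} = \left(\frac{K(z,z)K(w,w)}{|K(z,w)|^2}\right)^{-1/(N+1)}$, which is exactly the pointwise identity you isolate. Your additional remarks on $\Gamma$-invariance and the Rudin reference are more explicit than what the paper records, but the argument is the same.
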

\begin{proof}
By \eqref{volume} and
$$
1-|T_z w|^2 = \frac{(1-|z|^2)(1-|w|^2)}{|1-z \cdot \bar w|^2} = \left( \frac{K(z,z)K(w,w)}{|K(z,w)|^2} \right)^{-\frac{1}{N+1}},
$$
the lemma follows.
\end{proof}
For $\alpha > -1$, we define a weighted $L^2$-space by setting
\begin{equation}\nonumber
L^2_{(p,q), \alpha}(\Omega) := \{ f : f \text{ is a measurable section on $\Lambda^{p,q} T_{\Omega}^*$}, ~\|f \|^2_{\alpha} := \left< f,f\right>_{\alpha} < \infty \}
\end{equation}
and a weighted Bergman space by
$A^2_{\alpha}(\Omega) := L^2_{(0,0), \alpha}(\Omega) \cap \mathcal O(\Omega).$ In this setting, we extend $\bar \partial$-operator on $\Omega$ as the maximal extension of $\bar \partial$ on $\Omega$ which acts on smooth $(p,q)$ forms on $\Omega$.

\begin{lemma}\label{norm of f_l}
For any partial sum 
$$
F_{m_0+m} (\zeta, w) = \sum_{|I|=0}^{m_0+m} f_I (\imath (\zeta)) (T_{\imath (\zeta)} w)^I
$$
of $f$ in \eqref{f}, the following identity holds:

\begin{equation}\label{L2 norm}
\|F_{m_0+m} \|^2_\alpha = \frac{2^N \pi^{N}}{N!} \sum_{|I|=0}^{m_0+m} \|\varphi_{|I|}\|^2
\frac{ |I|! \Gamma(N+\alpha+1) }{\Gamma(N+|I|+\alpha+1)}.
\end{equation}
\end{lemma}

\begin{proof}
Let $\widehat M$ denote the fundamental domain of $M $ in $\widetilde M$
and $\widetilde \Omega$ denote the corresponding domain of $\Omega$. Note that $\widetilde \Omega = \widehat M \times \mathbb{B}^N \subset \Omega =  M \times_{\rho} \mathbb{B}^N$.
By Lemma~\ref{inner product}, $\|F_{m_0+m} \|^2_{\alpha}$ is equal to
\begin{equation}\label{norm of f_1}
\begin{aligned}
{c_{\alpha} 2^N}  \int_{{\widetilde \Omega}}  \bigg|\sum_{|I|= 0}^{m_0+m}  f_{I} (\imath (\zeta) ) (T_{\imath(\zeta)} w)^I\bigg|^2 ( 1-|T_{\imath(\zeta)} w |^2)^{\alpha}  {{\widetilde{H}}}^n   \frac{|K(w, \imath (\zeta) )|^2}{K(\imath(\zeta), \imath (\zeta) )} d\lambda_{w}. \end{aligned}
\end{equation}
where $\lambda_w = (\frac{\sqrt{-1}}{2})^N dw_1 \wedge d \overline w_1 \wedge \cdots \wedge dw_N \wedge d \overline w_N$ denotes the Lebesgue measure of $\mathbb{B}^N$.

Since $t=T_{\imath (\zeta) } w$,
$ J_{\mathbb R} T_{\imath (\zeta)} (0)
 = (1-|\imath (\zeta) |^2)^{N+1}$, $d\lambda_w
= | J_{\mathbb C}T_{\imath (\zeta)} t|^2 d\lambda_t$,

\begin{equation}\nonumber
K(\imath (\zeta) ,w) = K(T_{\imath (\zeta)} 0, T_{\imath (\zeta)} t) = \frac{ K(0,t) }{J_{\mathbb C}T_{\imath(\zeta)} (0) \overline{ J_{\mathbb C} T_{\imath (\zeta) }(t)}}
= \frac{ 1}{J_{\mathbb C}T_{\imath (\zeta)} (0) \overline{ J_{\mathbb C} T_{\imath (\zeta)} (t)}},
\end{equation}
and
\begin{equation}\nonumber
K(\imath (\zeta), \imath (\zeta) ) = K(T_{\imath (\zeta)} 0, T_{\imath (\zeta)} 0)  = \frac{K(0,0)}{| J_{\mathbb{C}} T_{\imath (\zeta)} (0) |^2} = \frac{1}{|J_{\mathbb{C}} T_{\imath (\zeta) }  (0) |^2},
\end{equation}
by \eqref{norm of f_1} we obtain
\begin{equation}\label{norm of f_2}
\begin{aligned}
\|F_{m_0+m}\|^2_{\alpha}
& = 2^N c_\alpha \int_{{\widehat M}}  {{\widetilde{H}}}^n \int_{\mathbb{B}^N}
\bigg|\sum_{|I|= 0}^{m_0+m}    f_{I} (\imath (\zeta) ) t^I\bigg|^2 ( 1-|t|^2 )^{\alpha}   d\lambda_{t} \\
&= 2^N c_\alpha      \int_{\widehat M} {{\widetilde{H}}}^n      \int_{\mathbb{B}^N} \sum_{|I|= 0}^{m_0+m}
\left| f_I (\imath (\zeta)) t^I \right|^2 \big( 1-|t|^2  \big)^{\alpha} d \lambda_{t}  \\
\end{aligned}
\end{equation}
The second equality in \eqref{norm of f_2} can be induced by the orthogonality of polynomials
with respect to the inner product $\int_{\mathbb{B}^n} f \bar g (1-|t|^2)^{\alpha} d \lambda_{t}$ (\cite{Zhu}).

Since
\begin{equation}\nonumber
\begin{aligned}
\| \varphi_{\ell} \|^2
= \sum_{|I|=\ell} \frac{I!}{\ell!}   \int_{{\widehat M}} {\left|f_{I} \circ \imath \right|^2} {{\widetilde{H}}}^n,
\end{aligned}
\end{equation}
by \eqref{norm of f_2} one has
\begin{equation}\nonumber
\begin{aligned}
\|F_{m_0+m} \|^2_{\alpha} &= \frac{2^N \pi^{N}}{N!} \sum_{|I|=0}^{m_0+m} \|\varphi_{|I|}\|^2 \frac{|I|!
\Gamma(N+\alpha+1) }{\Gamma(N+|I|+\alpha+1)}.
\end{aligned}
\end{equation}
\end{proof}

\begin{lemma}\label{convergence}
For any $\alpha >-1$, the formal sum $f$  converges in $L^2_{\alpha} (\Omega)$. Moreover, if $n \not = N$, then $f$ converges in $L^2_{-1}(\Omega)$.
\end{lemma}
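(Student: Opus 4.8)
The plan is to turn the statement into a convergence question for one explicit numerical series and settle it with an elementary Gamma-function asymptotic. The formal sum $f$ is attached to a fixed nonzero $\psi\in H^0(M,\imath^{-1}(S^{m_0}T^*_\Sigma))$, so $\varphi_\ell=0$ for $\ell<m_0$; moreover the proof of Lemma~\ref{norm of f} shows that the homogeneous components of $f$ of distinct degrees are mutually orthogonal in $L^2_\alpha(\Omega)$. Hence the partial sums of $f$ are Cauchy in $L^2_\alpha(\Omega)$ (and likewise in $L^2_{-1}(\Omega)$ for the Hardy norm \eqref{Hardy norm}) if and only if the series
\[
\frac{\pi^N}{N!}\sum_{m=0}^{\infty}\|\varphi_{m_0+m}\|^2\,\frac{(m_0+m)!\,\Gamma(N+\alpha+1)}{\Gamma(N+m_0+m+\alpha+1)}
\]
converges, in which case its sum is $\|f\|^2_\alpha$. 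First I would substitute the exact value of $\|\varphi_{m_0+m}\|^2$ from Lemma~\ref{existence}, so that the $m$-th summand becomes
\[
a_m=C\Big(\textstyle\prod_{j=1}^{m}\big(1+\tfrac{n-1}{m_0+j}\big)\Big)\frac{\Gamma(m_0+m)^2\,\Gamma(m_0+m+1)}{\Gamma(2m_0+m)\,\Gamma(m+1)\,\Gamma(N+m_0+m+\alpha+1)}
\]
for a constant $C=C(n,N,m_0,\alpha,\psi)>0$; since every factor is positive, it is exactly the convergence of $\sum_m a_m$ that has to be decided.

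Next I would put the finite product into closed form, $\prod_{j=1}^{m}(1+\tfrac{n-1}{m_0+j})=\frac{\Gamma(m_0+1)\,\Gamma(m_0+m+n)}{\Gamma(m_0+n)\,\Gamma(m_0+m+1)}$, turning $a_m$ into a single quotient of Gamma values, and then apply the elementary asymptotic $\Gamma(m+a)/\Gamma(m+b)\sim m^{a-b}$ $(m\to\infty)$ factor by factor. A short count of exponents gives
\[
a_m\sim C'\,m^{\,n-N-\alpha-2}\ \ (\alpha>-1),\qquad a_m\sim C'\,m^{\,n-N-1}\ \ (\alpha=-1),
\]
with $C'>0$. (Equivalently one may compute $a_{m+1}/a_m=1-\frac{N+\alpha+2-n}{m}+O(m^{-2})$ and invoke Gauss's test, reaching the same thresholds without Stirling.)

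Finally, since $\imath$ is a holomorphic embedding of the $n$-dimensional $\widetilde M$ into $\mathbb B^N$ we have $n\le N$, so for every $\alpha>-1$ the exponent satisfies $n-N-\alpha-2<-1$, whence $\sum_m a_m<\infty$ by comparison with a convergent $p$-series and $f\in L^2_\alpha(\Omega)$. For the Hardy norm the exponent is $n-N-1$, which is $<-1$ precisely when $n<N$; thus $f\in L^2_{-1}(\Omega)$ whenever $n\ne N$, while the excluded case $n=N$ would produce the divergent harmonic series, in agreement with the hypothesis. I expect the only real work to lie in the second step: assembling $\|\varphi_{m_0+m}\|^2$ with the Bergman weight $(m_0+m)!/\Gamma(N+m_0+m+\alpha+1)$ into a single Gamma quotient and keeping the exponents straight through the asymptotic, after which the $p$-series test closes the argument.
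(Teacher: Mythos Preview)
Your proposal is correct and follows essentially the same route as the paper: reduce via Lemma~\ref{norm of f} and the orthogonality of the homogeneous pieces to the convergence of a single positive series, insert the exact value of $\|\varphi_{m_0+m}\|^2$ from Lemma~\ref{existence}, and then decide convergence by an elementary ratio-type test. The only cosmetic difference is that the paper applies Raabe's test directly to $a_l/a_{l+1}$ (obtaining the limit $1+(N-n+\alpha+1)$, hence convergence for $\alpha>n-(N+1)$), whereas you rewrite the product as a Gamma quotient and read off the power $m^{\,n-N-\alpha-2}$ before invoking the $p$-series comparison; both yield the identical threshold and conclusion.
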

\begin{proof} 
By {\eqref{L2 norm}} and Lemma~\ref{norm relation}, the partial sum
$$
F_{m_0+m} (\zeta, w) = \sum_{|I|=0}^{m_0+m} f_{I} (\imath(\zeta)) (T_{\imath(\zeta)} w)^I
$$
satisfies
\begin{equation*}
\begin{aligned}
\| F_{m_0 +m} \|^2_{\alpha} &= \frac{{2^N}\pi^N}{N!} \sum_{k=0}^{m} \| \varphi_{m_0+k} \|^2 \frac{(m_0+k)! \Gamma(N+\alpha+1)}{\Gamma(N+ m_0+k + \alpha +1 )} \\
&= \frac{{2^N}\pi^N}{N!} \sum_{k=0}^{m} \left( \sum_{\ell=0}^{m_0} \| \varphi_{m_0+k}^{\ell} \|^2 \right) \frac{(m_0+k)!\Gamma(N+\alpha+1)}{\Gamma(N+m_0+k + \alpha +1 )}  \\
&= \frac{{2^N}\pi^N}{N!} \frac{\Gamma(N+\alpha+1) \Gamma(m_0+1)}{\Gamma(m_0+N+\alpha+1)} {\sum_{\ell=0}^{m_0} \sum_{k=0}^{m}  a_k^\ell},
\end{aligned}
\end{equation*}
where
\begin{equation}\label{term of series}
{a_k^\ell} := \frac{(m_0+1)_k}{(N+m_0+\alpha+1)_k} \frac{ (m_0)_k (m_0)_k}{(m_0+\ell)_k} \frac{1}{k!} \bigg( \prod_{j=1}^{k} \left(\frac{(\ell+j)+(n-1)}{m_0+j} \right) \bigg)\| \varphi_{m_0}^\ell \|^2
\end{equation}
and $(m_0)_{k} := m_0(m_0+1) \cdots (m_0+k-1)$.
Note that for each fixed $\ell=0,\cdots,m$,
\begin{equation*}
\begin{aligned}
k \left( \frac{a_k^\ell}{a_{k+1}^\ell} -1 \right) &= k \left( \frac{(k+1)(m_0+\ell+1)(k+N+m_0+\alpha+1)   }{ (k+m_0)^2(k+n+\ell)   } -1 \right) \\
&=k\left(\frac{(\ell-m_0+1)k-{m_0}^2}{(k+m_0)^2} \right) + (N+m_0+\alpha+1-(n+\ell)) \frac{k(k+1)(m_0+\ell+1)}{(k+m_0)^2 (k+n+\ell)} \\
& \rightarrow (\ell-m_0+1)+(N+m_0+\alpha+1-n-\ell) = 1+(N-n+\alpha+1)
\end{aligned}
\end{equation*}
as $k \rightarrow \infty$. Hence the series $\sum_{k=0}^{\infty}  a_k^\ell$ converges when $\alpha>n-(N+1)$ by the Raabe's test. Since $F_{m_0+m}$ is the partial sum of $f$, the lemma is now proved.
\end{proof}

\begin{remark}
Set $\alpha<-1$. For any formal sum $f$ given by  \eqref{f} define {$L^2$-norm for any partial sum of $f$} by \eqref{L2 norm}. Then Lemma~\ref{convergence} tells us that $f$ converges in $L^2_\alpha(\Omega)$ if $\alpha>n-(N+1)$ (cf. \cite[Chapter 12]{ZZ08}).
\end{remark}

\subsection{Holomorphicity of formal series}

\begin{lemma}\label{holo}
The formal sum $f$ given by \eqref{f}
is holomorphic.
\end{lemma}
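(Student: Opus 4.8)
The plan is to show that the formal series \eqref{f} converges locally uniformly on $\Omega = M\times_\rho\mathbb B^N$, since a locally uniform limit of holomorphic functions is holomorphic. Because the statement is local on the base, I would fix a point $[\zeta_0,w_0]\in\Omega$, pass to the universal cover, and work on a neighborhood of the form $V\times\mathbb B^N$ with $V\subset\widetilde M$ a small coordinate ball around $\zeta_0$ on which $\imath$ and the frame $e$ are defined. On such a set each term $f_I(\imath(\zeta))(T_{\imath(\zeta)}w)^I$ is holomorphic in $(\zeta,w)$: the coefficients $f_I\circ\imath$ are holomorphic by construction (they are the components of the holomorphic symmetric differentials $\varphi_{|I|}$, which lie in $H^0$ by Lemma \ref{existence} together with the spectral argument), and $T_{\imath(\zeta)}w$ is holomorphic in both variables away from the singular locus of the M\"obius map, which does not meet a small enough neighborhood. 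So the only issue is convergence.

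The key step is a sup-norm estimate on the pieces $\varphi_{m_0+m}$ in terms of their $L^2$-norms, which are controlled explicitly by Lemma \ref{existence}. First I would note that each $\varphi_{m_0+m}$ is an eigensection of the elliptic operator $\Box^0_{m_0+m,M}$ with eigenvalue $m^2+2m_0m-m$ (Lemma \ref{existence}); by elliptic estimates on the compact manifold $M$, together with a Moser-type iteration or Sobolev embedding, one obtains a polynomial-in-$m$ bound $\sup_M|\varphi_{m_0+m}|\le C(1+m)^{A}\|\varphi_{m_0+m}\|$ for constants $C,A$ independent of $m$ (this is the standard bound on eigenfunctions/eigensections of a Laplace-type operator by their eigenvalue and $L^2$-norm). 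Then I would invoke the asymptotic computed in the proof of Lemma \ref{convergence}: $\|\varphi_{m_0+m}\|^2$ decays like a constant times $m^{-(N+1)}$ up to polynomial factors, in any case fast enough that $\|\varphi_{m_0+m}\|$ is bounded. Combining, $\sup_M|\varphi_{m_0+m}|$ grows at most polynomially in $m$.

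Finally, on the fixed neighborhood $V\times\mathbb B^N_r$ with $r<1$, one has $|T_{\imath(\zeta)}w|\le r'<1$ uniformly, so the term with multi-index $I$, $|I|=m_0+m$, is bounded in absolute value by $(\#\{|I|=m_0+m\})\cdot\sup_M|\varphi_{m_0+m}|\cdot (r')^{m_0+m}$ — using that $\sum_{|I|=\ell}|f_I\circ\imath||t|^{|I|}$ is controlled by the pointwise norm of $\varphi_\ell$ and the frame bounds on $V$ — which is a polynomial in $m$ times $(r')^m$, hence summable. This gives locally uniform convergence on a neighborhood of every point, so $f\in\mathcal O(\Omega)$. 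I expect the main obstacle to be making the sup-norm bound on $\varphi_{m_0+m}$ rigorous and uniform in $m$: one must be careful that the elliptic constant coming from $\Box^0_{m,M}$ acting on sections of $\imath^{-1}(S^mT^*_\Sigma)$ does not itself degenerate with $m$, which requires writing the operator in a fixed finite atlas and tracking the $m$-dependence of the zeroth-order (curvature) terms — these grow only linearly in $m$ by Lemma \ref{RG formula}, so the eigenvalue bound absorbs them.
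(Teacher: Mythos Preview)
Your proposal has a genuine gap at the very first step: the individual terms $f_I(\imath(\zeta))(T_{\imath(\zeta)}w)^I$ are \emph{not} holomorphic in $\zeta$, so locally uniform convergence by itself would not give $f\in\mathcal O(\Omega)$. Two things go wrong simultaneously. First, the map $z\mapsto T_z(w)$ is not holomorphic in $z$: the formula $T_z(w)=\dfrac{z-P_z w-s_zQ_zw}{1-w\cdot\bar z}$ involves $\bar z$ through $P_z$, $s_z$ and the denominator. Second, for $\ell>m_0$ the sections $\varphi_\ell$ are \emph{not} holomorphic; by Lemma~\ref{existence} they satisfy $\Box^0_{\ell,M}\varphi_\ell=(\,(\ell-m_0)^2+2m_0(\ell-m_0)-(\ell-m_0)\,)\varphi_\ell$ with strictly positive eigenvalue, so $\bar\partial_M\varphi_\ell\neq0$. (Even for $\ell=m_0$, the coefficients $f_I$ are taken in the orthonormal frame $e_j=\sum_k A_{jk}\,dz_k$ with $A=dT_z(z)$, which is not a holomorphic frame.) The whole point of the construction \eqref{cases} is that these two sources of non-holomorphicity cancel \emph{telescopically} in the partial sums, not term by term.

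The paper's proof exploits exactly this cancellation: one computes $\overline Y_\mu F_m$ directly and, using the recursion $\bar\partial_M\varphi_\ell=-(\ell-1)\mathcal R_G\varphi_{\ell-1}$, finds that all interior contributions cancel, leaving only a boundary term
\[
\overline Y_\mu F_m \;=\; m\sum_{|I|=m}\sum_{\tau=1}^N \overline{(a_{\mu\tau}\circ\imath)}\,(f_I\circ\imath)\,(T_{\imath(\zeta)}w)^I(T_{\imath(\zeta)}w)_\tau .
\]
One then estimates $\|\bar\partial F_m\|_1^2$ and shows it tends to $0$, whence $\bar\partial f=0$ in the sense of distributions. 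Note also that your asymptotic is off: from Lemma~\ref{existence} one gets $\|\varphi_{m_0+m}\|^2=O(m^{n-2})$, not $O(m^{-(N+1)})$; this is harmless for a geometric-decay argument, but it does mean $\|\varphi_{m_0+m}\|$ is generally unbounded. If you want to salvage your outline, you would need both a uniform pointwise bound on the partial sums \emph{and} a separate argument that $\bar\partial F_m\to 0$ in some norm --- at which point you have essentially reproduced the paper's proof.
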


\begin{proof}
Note that since $f$ is holomorphic in $w$, we only need to show that $f$ is holomorphic in $\zeta$.
Let
$$
F_m(\zeta,w) := \sum_{|I|= 0}^m   f_{I}(\imath (\zeta) ) (T_{\imath (\zeta)} w)^I
$$
be the finite sum of $f$ and let $\Gamma_l^{j\mu} := \sum_{k,s}\overline A^{kj} \frac{\partial A_{ls}}{\partial \overline z_k} A^{s\mu}$.
Since
$$
\frac{\partial F_{m}}{\partial \bar \zeta_j } (\zeta,w)
= \frac{\partial \widetilde F_m}{\partial \bar \zeta_j} (\zeta,T_{\imath (\zeta)} w)
+ \sum_{k,\nu} \frac{\partial \widetilde F_m} {\partial t_k} (\zeta,T_{\imath (\zeta)} w) \frac{ \partial (T_{z} w)_{k} } {\partial \bar z _\nu} \bigg|_{z=\imath(\zeta) 
}\frac{\partial \bar \imath_{\nu} }{ \partial \bar \zeta_j }
$$
with $\widetilde F_m(\zeta ,t) := \sum_{|I|= 0}^m   f_I (\imath (\zeta)) t^I$, we obtain
\begin{equation}\nonumber
\begin{aligned}
 \overline Y_{\mu} F_m &= \sum_{j} \bar y_{j \mu} \frac{\partial \widetilde F_m}{\partial \bar \zeta_j} (\zeta,T_{\imath(\zeta)}w) + \sum_{j} \bar y_{j \mu} \left(  \sum_{k,\nu} \frac{\partial \widetilde F_m} {\partial t_k} (\zeta,T_{\imath (\zeta)} w) \frac{ \partial (T_{z} w)_{k} } {\partial \bar z_\nu} \bigg|_{ z=\imath(\zeta) 
} \frac{\partial \bar \imath_{\nu} }{ \partial \bar \zeta_j } \right) \\
&=\sum_{j} \bar y_{j \mu} \frac{\partial \widetilde F_m}{\partial \bar \zeta_j} (\zeta,T_{\imath(\zeta)}w) + \sum_{\tau=1}^{N} \overline {(a_{\tau \mu} \circ \imath)} \sum_{k, \nu} \bar{A}^{\nu \tau} \left( \frac{\partial \widetilde F_m}{\partial t_k} (\zeta,T_{\imath (\zeta)} w) \frac{\partial {(T_z w)}_{k}} {\partial \bar z_{\nu}} \bigg|_{ z=\imath(\zeta)
} \right)  \\
&= \sum_{|I|= 0}^m \bigg( \overline{Y}_{\mu} (f_I \circ \imath)  + \sum_{\tau=1}^{N} \overline{(a_{\tau \mu} \circ \imath)} \bigg( \sum_{k=1}^{N} i_k (f_{I} \circ \imath) \Gamma_{k}^{\tau k} \\
&\quad\quad\quad
+ \sum_{k=1}^{N} (i_k +1) \sum_{q \not =k} (f_{i_1 \cdots i_{k+1} \cdots i_{q-1} \cdots i_N} \circ \imath ) \Gamma_{k}^{\tau q}
+ |I| (f_I \circ \imath) (T_{\imath (\zeta)} w)_{\tau} \bigg)  \bigg)  (T_{\imath(\zeta)}w)^I.
\end{aligned}
\end{equation}
Here, the second equality holds by Lemma~\ref{useful formula} and the third equality holds by the equation (4.8) and Lemma 4.8 in \cite{Lee_Seo}. If we express $\varphi_{s} = \sum_{|I|=s} f( \imath (\zeta))  e^I |_{\imath (\zeta)}$, then we have
\begin{equation}\label{dbar1}
\begin{aligned}
\bar \partial \varphi_{s}
&=  \sum_{|I| =s} \sum_{\mu=1}^{n} \bigg( \overline Y_{\mu} (f_{I} \circ \imath) + \sum_{\tau=1}^{N} \overline{(a_{\tau \mu} \circ \imath)} \bigg( \sum_{k} i_k (f_{I} \circ \imath) \Gamma_{k}^{\tau k}\\
& \quad\quad\quad\quad\quad\quad\quad
 + \sum_{k} (i_k+1) \sum_{q \not = k} (f_{i_1 \cdots i_k+1 \cdots i_q-1 \cdots i_N} \circ \imath) \Gamma_{k}^{\tau q} \bigg)   \bigg) e^I |_{\imath (\zeta)}  \otimes \overline{{ \widetilde e }_{\mu}}.
\end{aligned}
\end{equation}
On the other hand, one has
\begin{equation}\label{dbar2}
\begin{aligned}
\bar \partial \varphi_{s}
&= -(s-1) \mathcal R_G( \varphi_{s-1} ) \\
&= -(s-1) \sum_{\mu=1}^n \sum_{\tau=1}^{N} \overline{( a_{\tau \mu} \circ \imath)}  \sum_{|J| = s-1}  (f_{J} \circ \imath) {(e^J e_\tau )|_{\imath (\zeta)}} \otimes \overline{ \widetilde{ e_{\mu} } }
\end{aligned}
\end{equation}
{by \eqref{cases}, \eqref{associated}, and the definition of $\mathcal{R}_{G}$}  with Lemma~\ref{ab}.
Hence by comparing \eqref{dbar1} and \eqref{dbar2} one obtains
\begin{equation}\nonumber
\begin{aligned}
-(s-1) & \sum_{|J| = s-1} \sum_{\tau=1}^{{ N}} \overline{( a_{\tau \mu} \circ \imath)} (f_{J} \circ \imath) t^J t_{\tau} = \sum_{|I| =s} \bigg( \overline Y_{\mu} (f_{I} \circ \imath)\\
&+ \sum_{\tau=1}^{N} \overline{(a_{\tau\mu} \circ \imath)} \bigg( \sum_{k} i_k (f_{I} \circ \imath) \Gamma_{k}^{\tau k}  + \sum_{k} (i_k+1) \sum_{q \not = k} (f_{i_1 \cdots i_k+1 \cdots i_q-1 \cdots i_N} \circ \imath) \Gamma_{k}^{\tau q} \bigg) \bigg) t^I.
\end{aligned}
\end{equation}
Therefore we obtain
$$
\overline Y_\mu F_m =  m \sum_{|I| = m} \sum_{\tau=1}^{N} \overline{ (a_{\tau \mu} \circ \imath)} (f_I \circ \imath) (T_{\imath (\zeta)} w)^I (T_{\imath(\zeta)}w)_{\tau}.
$$
If $f_1$ and $f_2$ are monomials in $t$ with $f_1 \neq cf_2$ for any $c\in \mathbb R$, we have $\int_{\mathbb{B}^n}
f_1 \bar f_2 (1-|t|^2)^{\alpha} d \lambda_{t}=0$.
Hence one obtains
\begin{equation}\nonumber
\begin{aligned}
\| \bar \partial F_{m} \|^2_{1}
& = m^2 \sum_{\tau=1}^{n} \bigg\| \sum_{|I| =m}  \overline{( a_{\tau \mu} \circ \imath )} (f_{I} \circ \imath)(\zeta) (T_{\imath(\zeta)} w)^I (T_{\imath(\zeta)}w)_{\tau} \bigg\|^2_{1}\\
& \lesssim m^2 \sum_{|I| =m} \int_{{\widehat{M}}} |(f_{I} \circ \imath) (\zeta)|^2 {{\widetilde H}}^n
\bigg( \sum_{\tau=1}^{n} \int_{\mathbb{B}^N} | t^I t_{\tau}|^2 (1-|t|^2) d\lambda_{t} \bigg) \\ 
& \lesssim m^2 \sum_{|I|=m} \| \varphi_I \|^2  \frac{m! \Gamma(N+2) \pi^N (i_1 + \cdots + i_n +n) }{N! \Gamma(N+m+3)}  \\
& \lesssim m^2 \sum_{|I|=m} \| \varphi_I \|^2  \frac{m! \Gamma(N+2) \pi^N (m +n) }{N! \Gamma(N+m+3)}\\
&\lesssim m^2  \sum_{\ell=0}^{m_0}  \| \varphi_m^\ell \|^2 \frac{m! \Gamma(N+2) \pi^N (m+n) }{N! \Gamma(N+m+3)}\\
&  \lesssim \sum_{\ell=0}^{m_0} \left( \frac{1}{\left(\frac{N}{m}+1+\frac{2}{m}\right) \left(\frac{N}{m} +1 + \frac{1}{m}\right)}  \frac{m!(m +n)}{(m+N)!} \right)  \| \varphi^\ell_{m} \|^2
\end{aligned}
\end{equation}
for $m \geq m_0$ by using Lemma~\ref{norm relation}.

Note that 
\begin{equation*}
\frac{1}{\left(\frac{N}{m}+1+\frac{2}{m}\right) \left(\frac{N}{m} +1 + \frac{1}{m}\right)} \frac{m!(m+n)}{(m+N)!}  = O\big(m^{-(N-1)} \big).
\end{equation*}
Moreover, 
\begin{equation*}
\frac{(m_0+\ell-1)! \{ (m_0+(m-{m_0})-1)!\}^2 }{\{(m_0-1)!\}^2 (m-{m_0})! (m_0+\ell+(m-{m_0})-1)!  }= O(m^{-1 + (m_0-\ell)})
\end{equation*}
and
\begin{equation}\nonumber
 \prod_{j=1}^{m-{m_0}} \left(\frac{(\ell+j)+(n-1)}{m_0+j} \right) = \frac{m_0! (\ell+(m-{m_0})+(n-1))!}{ (\ell+(n-1))! \big( m_0+(m-{m_0}) \big)! } = O(m^{(n-1)-(m_0-\ell)})
\end{equation}
by Lemma~\ref{existence} and Stirling's formula.
Hence $\| \bar \partial F_{m} \|^2_{1}= O(m^{n-N-1}) \rightarrow 0$ as $m \rightarrow \infty$. Therefore by the distribution theory, we conclude that $f$ is holomorphic.
\end{proof}

Let $f$ be a holomorphic function on $\Omega = M \times_{\rho} \mathbb{B}^N $. {Using $\imath$, we may regard $M \times_{\rho} \mathbb{B}^N$ as a quotient of $\imath (\widetilde{M}) \times \mathbb{B}^N$ under the diagonal action of $\rho(\Gamma)$ and it becomes a complex submanifold of $\mathbb{B}^N \times \mathbb{B}^N / \rho(\Gamma)$ which is a quotient of $\mathbb{B}^N \times \mathbb{B}^N$ under the same action.} So we may {identify} $f \in \mathcal{O} (\Omega)$ with $f \in \mathcal{O}( \imath ({\widetilde M}) \times \mathbb{B}^N)$ which satisfies $f(\imath (\zeta), w) = f ( (\rho \circ \gamma) (\imath (\zeta) ) , (\rho \circ \gamma) (w)  )$ for any $\gamma \in \Gamma$.

Let $(z,w) \in \imath({\widetilde M}) \times \mathbb{B}^N \subseteq \mathbb{B}^N \times \mathbb{B}^N$. Since $\tilde f (z,t) := f(z, T_z t) = f(z,w)$ is holomorphic for $t=T_z w$, we may express $ \tilde f$ by
$$
\tilde f(z,t) = \sum_{|I|=0}^{\infty} f_I (z) t^{I},\quad \text{where} \quad f_I(z) = \frac{1}{I!} \frac{\partial^{| I |} \tilde f}{\partial t^I} (z,0) \in C^{\infty} (\imath ({\widetilde M}) )
$$
Hence
$$
f(\imath (\zeta), w) = \sum_{|I|=0}^{\infty} f_I (\imath (\zeta)) (T_{\imath (\zeta)} w)^I
$$
on ${\widetilde M}\times \mathbb{B}^N$.
We associate $\sum_{|I|=0}^{\infty} f_I (\imath (\zeta)) (T_{\imath (\zeta)} w)^I$ to a set of sections $\{ \varphi_m\}$ with  ${\varphi_m} \in C^{\infty} (M, \imath^* (S^{m} T_{\Sigma}^*))$
which is defined by
$$
\varphi_m :=   \sum_{|I|=m} f_I (\imath (\zeta) ) {e^I \big|_{\imath (\zeta)}}
$$
where ${e^I}= e_1 ^{i_1} \cdots e_N^{i_N}$ and $i_1 +\cdots + i_N = m$. We call $\{ \varphi_m \}$ the {\it associated differential} of $f$ on $M$. Note that by a similar argument of Lemma~\ref{norm of f_l}, we obtain
\begin{equation}\label{norm of f}
\| f \|^2_{\alpha} = \frac{{2^N}\pi^N}{N!} \sum_{|I|=0}^{\infty} \| \varphi_{|I|} \|^2 \frac{|I|! \Gamma(N+\alpha+1)}{\Gamma(N+|I|+\alpha+1)} 
\end{equation}

The Hardy space $A^2_{-1}(\Omega)$ is defined by
$$
A^2_{-1}(\Omega) := \{  f \in \mathcal{O}(\Omega) : \|f \|^2_{-1} < \infty  \}$$
where the norm $\| f\|^2_{-1}$ is given by
\begin{equation}\label{Hardy norm}
\|f \|^2_{-1} := \frac{{2^N}\pi^N}{N!} \sum_{|I|=0}^{\infty} \| \varphi_{|I|} \|^2 \frac{|I|! \Gamma(N)}{\Gamma (N+|I|)},
\end{equation}
with the associated differential  $\{\varphi_{|I|} \}$ of $f$.

\begin{lemma}\label{Hardy}
If $N>n$, then for any $\alpha >-1$, $A^2_{-1}(\Omega) \subset A^2_{\alpha} (\Omega)$.
\end{lemma}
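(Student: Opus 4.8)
The plan is to reduce the inclusion to an elementary coefficientwise comparison of the two weighted $\ell^2$ series that express $\|f\|_{-1}^2$ and $\|f\|_\alpha^2$ through the associated differential of $f$. Take an arbitrary $f\in A^2_{-1}(\Omega)$ and let $\{\varphi_m\}_{m\ge0}$ be its associated differential on $M$, so that by \eqref{Hardy norm}
$$
\|f\|_{-1}^2=\frac{\pi^N}{N!}\sum_{m=0}^{\infty}\|\varphi_m\|^2\,\frac{m!\,\Gamma(N)}{\Gamma(N+m)}<\infty .
$$
Since $f\in\mathcal O(\Omega)$, on $\imath(\widetilde M)\times\mathbb B^N$ it is given by the convergent expansion $f(\imath(\zeta),w)=\sum_{|I|\ge0}f_I(\imath(\zeta))(T_{\imath(\zeta)}w)^I$ recorded just before the definition of $A^2_{-1}(\Omega)$; thus $f$ is a formal sum of the type \eqref{f} whose associated differential is precisely $\{\varphi_m\}$. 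Hence Lemma~\ref{norm of f} applies, and reading it as an identity in $[0,\infty]$ (the integrand in Lemma~\ref{inner product} is nonnegative, so Tonelli's theorem lets us use the formula before knowing $\|f\|_\alpha<\infty$) gives, for every $\alpha>-1$,
$$
\|f\|_\alpha^2=\frac{\pi^N}{N!}\sum_{m=0}^{\infty}\|\varphi_m\|^2\,\frac{m!\,\Gamma(N+\alpha+1)}{\Gamma(N+m+\alpha+1)} .
$$

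Next I would compare the two series term by term. For $m=0$ both weights equal $1$. For $m\ge1$ one has
$$
\frac{\Gamma(N+\alpha+1)}{\Gamma(N+m+\alpha+1)}=\prod_{j=1}^{m}\frac{1}{N+\alpha+j},\qquad
\frac{\Gamma(N)}{\Gamma(N+m)}=\prod_{j=1}^{m}\frac{1}{N+j-1},
$$
and since $\alpha>-1$ forces $N+\alpha+j>N+j-1>0$ for every $j$, each factor on the left is strictly smaller than the corresponding factor on the right. Therefore $\dfrac{m!\,\Gamma(N+\alpha+1)}{\Gamma(N+m+\alpha+1)}\le\dfrac{m!\,\Gamma(N)}{\Gamma(N+m)}$ for all $m\ge0$, and summing against the nonnegative coefficients $\|\varphi_m\|^2$ yields $\|f\|_\alpha^2\le\|f\|_{-1}^2<\infty$. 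Consequently $f\in L^2_{(0,0),\alpha}(\Omega)\cap\mathcal O(\Omega)=A^2_\alpha(\Omega)$, which is the desired inclusion.

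I do not expect a genuine obstacle here: the whole content is a monotonicity property of the Gamma function — equivalently, $\|f\|_\alpha^2$ is non-increasing in $\alpha$ and $\|f\|_{-1}^2=\sup_{\alpha>-1}\|f\|_\alpha^2=\lim_{\alpha\to-1^+}\|f\|_\alpha^2$ by monotone convergence. The only point demanding a little care is the right to invoke Lemma~\ref{norm of f} for an arbitrary $f\in A^2_{-1}(\Omega)$ rather than for a function produced by the Poincar\'e-type construction; this is exactly why one first notes that every $f\in\mathcal O(\Omega)$ is of the form \eqref{f} with associated differential $\{\varphi_m\}$ and that the norm identity of Lemma~\ref{norm of f} is valid in $[0,\infty]$ by nonnegativity. (In fact the inclusion holds for all $N$; the hypothesis $N>n$ is the case of interest because then, by Lemma~\ref{convergence} and Theorem~\ref{main}, $A^2_{-1}(\Omega)$ is infinite-dimensional.)
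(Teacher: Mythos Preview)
Your proof is correct and follows essentially the same approach as the paper's: both arguments compare the two series for $\|f\|_{-1}^2$ and $\|f\|_\alpha^2$ termwise via the Gamma-function inequality $\frac{\Gamma(N+\alpha+1)}{\Gamma(N+m+\alpha+1)}\le\frac{\Gamma(N)}{\Gamma(N+m)}$ for $\alpha>-1$, the paper recasting this through Pochhammer symbols while you use the equivalent product form $\prod_{j=1}^{m}(N+\alpha+j)^{-1}<\prod_{j=1}^{m}(N+j-1)^{-1}$. Your extra care in justifying that Lemma~\ref{norm of f} applies to an arbitrary $f\in A^2_{-1}(\Omega)$ (via the expansion preceding \eqref{Hardy norm} and Tonelli) and your remark that the hypothesis $N>n$ is not actually used in the inclusion are both valid observations that the paper leaves implicit.
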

\begin{proof}
For any $|I| \geq 1$ the inequality
\begin{equation*}
\frac{|I|! \Gamma(N+\alpha+1)}{\Gamma(N+|I|+\alpha+1)} < \frac{|I|! \Gamma(N)}{\Gamma(N+m)}
\end{equation*}
is equivalent to
\begin{equation}\label{inequality}
\frac{(\alpha+1)_{N}}{(\alpha+1)_{N+|I|}} < \frac{(N-1)!} {(N+|I|-1)!}
\end{equation}
and \eqref{inequality} holds whenever $\alpha>-1$. Therefore, the lemma follows from  {\eqref{norm of f}}, \eqref{Hardy norm} and the comparison test.
\end{proof}

Now we define a linear map
$$
{\Phi: \bigoplus_{m=0}^{\infty}H^{0}(M, \imath^*( S^m T_{\Sigma}^* ))   \rightarrow \mathcal{O} (\Omega)},
$$
For a constant function $\psi \in H^{0}(M, \imath^* (S^0 T_{\Sigma}^*))$, identifying $S^0 T^*_\Sigma$ with the trivial line bundle $\Sigma\times\mathbb C$, we associate $\psi$ to the constant function $\Phi(\psi)$ of the same constant value.
For a non-zero $\psi \in H^0(M, \imath^*(S^mT^*_
\Sigma))\cong\bigoplus_{\ell=0}^mH^0 (M, {S^{\ell} T_{M}^* \otimes S^{m-\ell} N^*})$, {we consider  sequences $\{\varphi_k^0 \}, \cdots ,\{\varphi_k^m \}$ for $\psi$ described in  \eqref{cases} and Lemma~\ref{existence} and define $\Phi(\psi)$ by the formal sum $f$ given by \eqref{associated}, \eqref{associated2}, and 
\eqref{f}.} Then by \eqref{norm of f} and Lemma~\ref{convergence}, the image of $\Phi$ is contained in $A^2_{\alpha}(\Omega)$ for any $\alpha >-1$. If $n \not = N$, then $\Phi(\psi)$ belongs to $A^2_{-1}(\Omega)$.

\begin{lemma}\label{symm diff from holo}
Let $f$ be a holomorphic function on $\Omega$. Then the associated differential $\{\varphi_m \}$ of $f$ on $M$ satisfies
\begin{equation*}\label{recursive}
\bar \partial_{M} \varphi_{m} =-(m-1)\mathcal{R}_{G} \varphi_{m-1}.
\end{equation*}
\end{lemma}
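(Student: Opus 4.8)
The plan is to run the computation from the proof of Lemma~\ref{holo} in reverse. Passing to the universal cover, identify $f$ with a $\rho(\Gamma)$-invariant holomorphic function on $\imath(\widetilde M)\times\mathbb B^N$, set $\tilde f(z,t):=f(z,T_z t)$, which is holomorphic in $t$ and smooth in $z$, and write $\tilde f(z,t)=\sum_{|I|=0}^\infty f_I(z)\,t^I$; by definition the associated differential is $\varphi_m=\sum_{|I|=m}(f_I\circ\imath)\,(e|_{\imath(\zeta)})^I$. Since $\tilde f$ is holomorphic in $t$, this series together with its termwise $\bar z$-derivatives converges locally uniformly, so all termwise manipulations below are legitimate. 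The only hypothesis used is that $f$ is holomorphic, i.e.\ $\overline Y_\mu f\equiv 0$ for $\mu=1,\dots,n$.

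First I would apply $\overline Y_\mu$ to $f(\zeta,w)=\tilde f(\imath(\zeta),T_{\imath(\zeta)}w)$ by exactly the chain-rule computation used for $\overline Y_\mu F_m$ in the proof of Lemma~\ref{holo} (via (4.8) and Lemma~4.8 in \cite{Lee_Seo}), now for the full series. This expresses $\overline Y_\mu f$ as a power series $\sum_{|I|=0}^\infty c_{\mu,I}(\zeta)\,(T_{\imath(\zeta)}w)^I$ in which $c_{\mu,I}$ is the bracketed coefficient of $(e|_{\imath(\zeta)})^I\otimes\overline{\widetilde e_\mu}$ appearing in \eqref{dbar1}, augmented by the single degree-raising term $|I|\sum_\tau\overline{(a_{\mu\tau}\circ\imath)}(f_I\circ\imath)(T_{\imath(\zeta)}w)_\tau$. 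After the change of variables $t=T_{\imath(\zeta)}w$ (a biholomorphism of $\mathbb B^N$ for each fixed $\zeta$) the $(T_{\imath(\zeta)}w)^I$ are independent monomials in $t$, so $\overline Y_\mu f\equiv 0$ forces every $c_{\mu,I}$ to vanish. Reading off the coefficient of the monomial of multidegree $I$ with $|I|=m$ — where the degree-raising term feeds in the piece indexed by $J:=I-e_\tau$ — shows, for every $\mu$ and every $I$ with $|I|=m$, that the bracketed coefficient of $(e|_{\imath(\zeta)})^I\otimes\overline{\widetilde e_\mu}$ in \eqref{dbar1} equals $-(m-1)\sum_{\tau:\,i_\tau\geq1}\overline{(a_{\mu\tau}\circ\imath)}\,(f_{I-e_\tau}\circ\imath)$.

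Finally I would substitute this identity into \eqref{dbar1}, which is itself a purely formal consequence of the definition of $\varphi_m$ and hence valid for any choice of the $f_I$. This yields
\[
\bar\partial_M\varphi_m=-(m-1)\sum_{|I|=m}\sum_{\mu=1}^n\sum_{\tau:\,i_\tau\geq1}\overline{(a_{\mu\tau}\circ\imath)}\,(f_{I-e_\tau}\circ\imath)\,(e|_{\imath(\zeta)})^I\otimes\overline{\widetilde e_\mu};
\]
re-indexing by $I=J+e_\tau$ with $|J|=m-1$, using $(e|_{\imath(\zeta)})^{J+e_\tau}=(e|_{\imath(\zeta)})^J(e|_{\imath(\zeta)})_\tau$, and invoking Lemma~\ref{ab} to rewrite $\overline{(a_{\mu\tau}\circ\imath)}$ in terms of $\overline{\imath^* e_\tau}$, the right-hand side becomes exactly $-(m-1)\mathcal R_G\varphi_{m-1}$ in the coordinate form already used for $\mathcal R_G$ in the proof of Lemma~\ref{RG formula} and in \eqref{dbar2}, which proves the lemma. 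The analytic content is immediate from $\bar\partial f=0$; the only mildly delicate point is the multi-index bookkeeping in extracting coefficients and reindexing, but this is precisely the reverse of the bookkeeping already carried out in the proof of Lemma~\ref{holo}, so I expect no essential obstacle.
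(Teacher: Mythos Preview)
Your proposal is correct and follows essentially the same approach as the paper's proof. Both arguments compute $\overline\partial f$ in the variable $\zeta$ via the chain rule applied to $\tilde f(\imath(\zeta),T_{\imath(\zeta)}w)$, extract the Taylor coefficients in $t=T_{\imath(\zeta)}w$ (you by linear independence of monomials, the paper by applying $\tfrac{1}{I!}\partial^{|I|}/\partial t^I|_{t=0}$), obtain the relation \eqref{taylor}, and then match this against the expression \eqref{dbar1} for $\bar\partial_M\varphi_m$ to conclude; the only cosmetic difference is that you explicitly recycle the $\overline Y_\mu F_m$ computation from Lemma~\ref{holo}, whereas the paper cites the parallel Proposition~4.9 of \cite{Lee_Seo}.
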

\begin{proof}
Take a point $q \in \widetilde{M}$ and small open set $q \in U \subset \widetilde{M}$ such that $U \cong \imath (U) \subset \imath (\widetilde M)$. Consider a local coordinate system $(z_1,\cdots, z_N)$ at $p :=\imath(q) \in \imath(U)$ such that 
$$
\imath(U) = \{(z_1,\cdots, z_N): z_{n+1}=\cdots = z_N=0 \} \quad \text{near} \quad p=(0,\cdots,0).
$$
Since the holomorphicity of $f$ on $\Omega$ implies
\begin{equation}\label{equation3}
0 = {\frac{\partial}{\partial \bar \zeta_j} f( \imath (\zeta), w) } =  \sum_{\mu=1}^{n} \frac{\partial \tilde f}{\partial \bar z_{\mu}}  \frac{\partial \bar \imath_{\mu}}{\partial \bar \zeta_j} 
+ \sum_{k=1}^{N} \sum_{\mu=1}^{n} \frac{\partial \tilde f}{\partial t_k} \frac{\partial(T_{ z} w)_k}{\partial \bar z_{\mu}} \bigg |_{z=\imath(\zeta)} \frac{\partial \bar \imath_{\mu}} {\partial \bar \zeta_j} \quad \text{for} ~~ {j=1,\cdots,n},
\end{equation}
by substituting $w=T_{\imath (\zeta)} t$ into \eqref{equation3}, we have
$$
0 =  \sum_{\mu=1}^{n} \frac{\partial \tilde f}{\partial \bar z_{\mu}} \frac{\partial \bar \imath_{\mu}}{\partial \bar \zeta_j} 
+ \sum_{k=1}^{N} \sum_{\mu=1}^{n} \frac{\partial \tilde f}{\partial t_k} \frac{\partial(T_{ z} w)_k}{\partial \bar z_{\mu}} \bigg|_{\begin{subarray}{l} z=\imath(\zeta) \\ w=T_{\imath(\zeta)}t \end{subarray}
}  \frac{\partial \bar \imath_{\mu}}{\partial \bar \zeta_j} \quad \text{for} ~~ {j=1,\cdots,n}.
$$
Therefore
\begin{equation*}
\begin{aligned}
0 &= \frac{1}{I!} \frac{\partial^{|I|} } {\partial t^{I}} \bigg |_{t=0} \left(\sum_{\mu=1}^{n}  \frac{\partial \tilde f}{\partial \bar z_{\mu}} \frac{\partial \bar \imath_{\mu}}{\partial \bar \zeta_j} + \sum_{k=1}^{N} \sum_{\mu=1}^{n} \frac{\partial \tilde f}{\partial t_k}  \frac{\partial(T_{ z} w)_k}{\partial \bar z_{\mu}}\bigg|_{\begin{subarray}{l} z=\imath(\zeta) \\ w=T_{\imath(\zeta)}t \end{subarray}
} \frac{\partial \bar \imath_{\mu}}{\partial \bar \zeta_j} \right) \\
&=   \sum_{\mu=1}^{n}  \frac{\partial \bar \imath_\mu}{\partial \bar \zeta_j} \frac{\partial f_I  }{\partial \bar z_{\mu}} \bigg |_{z= \imath(\zeta) }  +  \sum_{k=1}^{N} \sum_{\mu=1}^{n} \frac{1}{I!} \frac{\partial^{|I|}}{\partial t^I} \bigg |_{t=0} \left(  \frac{\partial \tilde f}{\partial t_k}  \frac{\partial(T_{ z} w)_k}{\partial \bar z_{\mu}}\bigg|_{\begin{subarray}{l} z=\imath(\zeta) \\ w=T_{\imath(\zeta)}t \end{subarray}
} \frac{\partial \bar \imath_{\mu}}{\partial \bar \zeta_j}  \right).
\end{aligned}
\end{equation*}
By a similar computation to Proposition 4.9 in \cite{Lee_Seo} with {Lemma~\ref{useful formula}}, 
\begin{equation}\label{taylor}
\begin{aligned}
0&= \overline {Y_{q}} (f_{i_1 \cdots i_N} \circ \imath) + \sum_{l=1}^{N} \overline {( {a_{lq}} \circ \imath)} \bigg( \sum_{k=1}^{N} \bigg( i_k (f_{i_1\cdots i_N} \circ \imath) \Gamma_{k}^{lk} \\
&\quad\quad\quad\quad\quad\quad\quad + \sum_{\tau \not =k} (i_k+1) (f_{i_1 \cdots i_k +1 \cdots i_{\tau} -1 \cdots i_N} \circ \imath) \Gamma^{l \tau}_{k} \bigg)
+ (|I|-1) (f_{i_1 \cdots i_{q}-1 \cdots i_N} \circ \imath) \bigg).
\end{aligned}
\end{equation}
If we express $\varphi_{l} = \sum_{|I|=l} f_I (\imath (\zeta) ) {e^I |_{\imath (\zeta)}}$, we have
\begin{equation*}
\begin{aligned}
\bar \partial \varphi_{i_1 \cdots i_N}
&=   \sum_{q=1}^{n} \bigg( \overline {Y_{q}} (f_{i_1 \cdots i_N } \circ \imath) + \sum_{k,l=1}^{N} \overline {({a_{lq}} \circ \imath)}  i_k (f_{i_1 \cdots i_N} \circ \imath) \Gamma_{k}^{l k}  \bigg) e_{1}^{i_1} \cdots e_{N}^{i_N} \otimes \overline { {\widetilde e}_{q} }   \\
& \quad+ \sum_{q=1}^{n} \bigg( \sum_{k,l=1}^{N}  {\bar a_{lq} }  i_k \sum_{\tau \not = k} (f_{i_1 \cdots i_N} \circ \imath) \Gamma_{k}^{l \tau}    \bigg) e_1^{i_1} \cdots e_k^{i_k-1} \cdots e^{i_{\tau}+1}_{\tau} \cdots e_N^{i_N} \otimes \overline{ {\widetilde e}_{q}}
\end{aligned}
\end{equation*}
Therefore, the lemma follows by \eqref{taylor}.
\end{proof}

\begin{remark}
By Lemma~\ref{symm diff from holo}, for any $f \in \mathcal{O} (\Omega)$ which vanishes up to $m$-th order with nonvanishing $(m+1)$-th order on $D:= \{ [( \zeta, \imath(\zeta) )] \in \Omega : \zeta \in {\widetilde M} \}$, there exists a nonzero holomorphic section $\varphi_{m+1}$ of $\imath^* (S^{m+1}T_\Sigma^*)$ associated to $f$.
\end{remark}

\begin{proposition}\label{density}
The linear map
$$
\Phi: \bigoplus_{m=0}^{\infty} H^0 (M, \imath^* (S^m T_\Sigma^*)) \rightarrow \mathcal{O} (\Omega)
$$
has a dense image in $\mathcal{O}(\Omega)$ equipped with the compact open topology.
\end{proposition}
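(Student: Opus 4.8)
The plan is to prove density by \emph{peeling} a holomorphic function along the diagonal $D:=\{[(\zeta,\imath(\zeta))]:\zeta\in\widetilde M\}\subset\Omega$. Recall that every $f\in\mathcal O(\Omega)$ has a well-defined associated differential $\{\varphi_m(f)\}$, that $f$ is determined by it, and that $f$ vanishes to order $\ge m$ along $D$ precisely when $\varphi_0(f)=\dots=\varphi_{m-1}(f)=0$. First I would reduce to the case that $f$ is bounded on $\Omega$: for $s\in(0,1)$ the fibre dilation $f_s(\zeta,w):=\widetilde f\bigl(\zeta,s\,T_{\imath(\zeta)}w\bigr)$, with $\widetilde f(\zeta,t):=f(\zeta,T_{\imath(\zeta)}t)$, is holomorphic on $\Omega$, its fibrewise supremum over $\Omega$ equals $\sup_{K_s}|f|$ where $K_s:=\{[(\zeta,w)]\in\Omega:|T_{\imath(\zeta)}w|\le s\}$ is compact, so $f_s\in L^\infty(\Omega)\subset\bigcap_{\alpha>-1}A^2_\alpha(\Omega)$, and $f_s\to f$ uniformly on compacta as $s\to1^-$. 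Hence it suffices to approximate bounded $f$.

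Next comes the peeling itself. By Lemma~\ref{symm diff from holo}, if $h\in\mathcal O(\Omega)$ vanishes to order $\ge k$ along $D$ then $\bar\partial_M\varphi_k(h)=-(k-1)\mathcal R_G\varphi_{k-1}(h)=0$, so $\varphi_k(h)\in H^0(M,\imath^{-1}(S^kT_\Sigma^*))$, and $h-\Phi(\varphi_k(h))$ vanishes to order $\ge k+1$ along $D$ — this is exactly the Remark preceding the statement. Iterating from $f$ produces sections $\psi_k\in H^0(M,\imath^{-1}(S^kT_\Sigma^*))$ and partial sums $g_k:=\sum_{j=0}^{k-1}\Phi(\psi_j)\in\operatorname{Im}(\Phi)$ with $f-g_k$ vanishing to order $\ge k$ along $D$. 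Since the degree-$k$ components of $\Phi(\psi_j)$ for $j<k$ are minimal solutions, hence orthogonal to $\ker\bar\partial_M$, the section $\psi_k$ is simply the $L^2$-orthogonal projection of $\varphi_k(f)$ onto $H^0(M,\imath^{-1}(S^kT_\Sigma^*))$; in particular $\|\psi_k\|\le\|\varphi_k(f)\|$, and for bounded $f$ the $H^2$-bound on each fibre gives $\|\varphi_k(f)\|^2\le C\binom{k+N-1}{N-1}\operatorname{Vol}(M)\,\|f\|_\infty^2$, i.e. polynomial control in $k$.

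It then remains to show $g_k\to f$ uniformly on every compact $K\subset\Omega$; enlarging $K$ we may take $K=K_r$. Because $f-g_k$ vanishes to order $\ge k$ along $D$, Cauchy estimates in the fibre variable $t=T_{\imath(\zeta)}w$ give, for any $\rho\in(r,1)$,
\[
\sup_{K_r}|f-g_k|\ \le\ \Bigl(\sum_{|I|\ge k}(r/\rho)^{|I|}\Bigr)\ \sup_{K_\rho}|f-g_k|,
\]
whose prefactor decays geometrically in $k$. Combining this with the interior estimate $\sup_{K_\rho}|h|\le C_{K_\rho,\alpha}\|h\|_\alpha$ for $h\in A^2_\alpha(\Omega)$ (sub-mean-value over a fixed polydisc neighbourhood, on which the weight $\delta^{\alpha+N+1}$ is bounded below) and with the norm identity $\|\Phi(\psi_j)\|_\alpha^2=C_j^{(\alpha)}\|\psi_j\|^2$ furnished by Lemmas~\ref{norm of f}, \ref{existence} and \ref{convergence}, the whole problem is reduced to bounding $\sup_{K_\rho}|f-g_k|\le\sup_{K_\rho}|f|+\sum_{j<k}\sup_{K_\rho}|\Phi(\psi_j)|$ by a quantity growing in $k$ more slowly than $\sum_{|I|\ge k}(r/\rho)^{|I|}$ decays.

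The main obstacle will be exactly this a priori bound: controlling $\sum_{j<k}\|\Phi(\psi_j)\|_\alpha$ — equivalently the growth of the constants $C_j^{(\alpha)}$ in the minimal-solution norm formula of Lemma~\ref{existence} — against the geometric decay of the Cauchy prefactor. Here one must use the full strength of the estimates in Lemmas~\ref{existence}--\ref{convergence}, exploit the freedom in the choice of $\alpha>-1$, and use the reduction of Step~1 to insert an extra geometric factor $s^j$ by working with a fibre dilation $f_s$, $s$ near $1$. Concretely, the cleanest way to finish once such control is available is to show that $\{g_k\}$ is a bounded family in some $A^2_\alpha(\Omega)$; then by the Montel property of $\mathcal O(\Omega)$ some subsequence converges in the compact-open topology, its limit has the same associated differential as $f$ (the two agree in all degrees $<k_i\to\infty$) and hence equals $f$, so that in fact $g_k\to f$ in the compact-open topology, which gives the density of $\operatorname{Im}(\Phi)$.
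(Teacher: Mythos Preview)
Your peeling construction in Step~2 is correct and is essentially the content of Lemma~\ref{symm diff from holo} and the Remark following it; the identification of $\psi_k$ as the $L^2$-orthogonal projection of $\varphi_k(f)$ onto the holomorphic sections is also right. The problems lie in Steps~1 and~3.

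In Step~1 your fibre dilation $f_s(\zeta,w)=f\bigl(\zeta,\,T_{\imath(\zeta)}(sT_{\imath(\zeta)}w)\bigr)$ is \emph{not} holomorphic in $\zeta$ when $s<1$. The map $T_z$ depends on $\bar z$ through $P_z$, $s_z$ and the denominator $1-w\cdot\bar z$; only at $s=1$ does the composition collapse to the identity $T_{\imath(\zeta)}\circ T_{\imath(\zeta)}=\mathrm{Id}$ and the $\bar\zeta$-dependence disappear. (This is exactly why the partial sums $F_m$ in Lemma~\ref{holo} are not holomorphic and why $\bar\partial F_m$ must be estimated.) So your reduction to bounded $f$ does not go through, and with it the device of ``inserting an extra geometric factor $s^j$'' in Step~3 is unavailable.

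In Step~3 the obstacle you flag is genuine and is not resolved by your sketch. You need subexponential growth of $\sum_{j<k}\|\Phi(\psi_j)\|_\alpha$ in $k$, but from Lemmas~\ref{existence} and~\ref{convergence} the constant $C_j^{(\alpha)}$ relating $\|\Phi(\psi_j)\|_\alpha^2$ to $\|\psi_j\|^2$ is a full hypergeometric series whose behaviour as a function of the \emph{starting degree} $m_0=j$ you have not analysed; together with only a polynomial bound $\|\psi_j\|^2\lesssim j^{N-1}$, this does not yield the required control. The Montel argument is fine \emph{if} you can prove $\{g_k\}$ bounded in some $A^2_\alpha(\Omega)$, but that is precisely the missing estimate.

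The paper sidesteps both issues by a different, Hilbert-space route. One restricts to the relatively compact exhaustion $\Omega_\epsilon=\{|T_{\imath(\zeta)}w|<\epsilon\}$ (so every $f\in\mathcal O(\Omega)$ lies in $A^2(\Omega_\epsilon)$ and Cauchy estimates reduce compact-open density to $L^2$-density there), and then argues by contradiction: if some nonzero $f\in A^2(\Omega_\epsilon)$ were orthogonal to $\operatorname{Im}\Phi$, take the first nonvanishing associated differential $\varphi_{m_0}$ and show, by induction using the eigenspace projections of Corollary~\ref{eigenvalue}, that the sequence $\{\varphi'_k\}$ built from $\varphi_{m_0}$ via \eqref{cases} coincides with $\{\Pi^0_{m_0+m,E_{m_0+m}}\varphi_{m_0+m}\}$. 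Then $\langle\langle f,\Phi(\varphi_{m_0})\rangle\rangle_{0,\epsilon}\neq 0$, a contradiction. This avoids any a~priori growth bound on $g_k$ and any need for a holomorphic dilation.
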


Let
$$
\Omega_{\epsilon} := \{ [(\zeta,w)] \in \Omega : |T_{\imath (\zeta)} w | < \epsilon \}
$$
with $0<\epsilon<1$. These $\Omega_\epsilon$ exhausts $\Omega$.
Define
$$
L^2(\Omega_{\epsilon}) : = \{ f : \text{$f$ is measurable function on $\Omega_{\epsilon}$ such that} ~ \| f \|^2_{0,\epsilon} := \langle \langle f, f \rangle \rangle^2_{0, \epsilon} < \infty \},
$$
where
$$
\langle \langle f , g \rangle \rangle^2_{0,\epsilon} := \int_{\Omega_{\epsilon}} f \bar g \delta^{N+1} dV_{\omega}.
$$
The Bergman space $A^2 (\Omega_{\epsilon})$ is given by $L^2 (\Omega_{\epsilon}) \cap \mathcal{O} (\Omega_{\epsilon})$.

\begin{proof} 
Since the proof of the proposition is similar to those in \cite{Adachi} and \cite{Lee_Seo}, we will only give a sketch of it. 
By the Cauchy estimate, it suffices to show that the image of $\Phi$ is dense in $A^2 (\Omega_{\epsilon})$ for any $0<\epsilon<1$. For a contradiction, suppose that there exists a non-zero holomorphic function $f \in A^2(\Omega_{\epsilon})$ which is orthogonal to the image of $\Phi$ in $A^2 (\Omega_{\epsilon})$. Then, for the associated differential $\{\varphi_k\}$ of $f$ on $M$, there exists $m_0 \in \mathbb{N}$ such that $\varphi_{k}=0$ for any $k<m_0$, but $\varphi_{m_0} \not =0$. Since $\varphi_{m_0} \not =0$, there exists an $\ell$, $0 \leq \ell \leq m_0$
such that $0 \not = \varphi_{m_0}^{\ell} \in H^0 \big(M, {S^\ell T_{M} \otimes S^{m_0-\ell} N^*} \big)$. 

Now we define orthogonal projections:
\begin{equation*}
\Pi^{i,\ell}_{m_0+m, E^{\ell}_{m_0+m}}: L^2 (M,  {S^{\ell+m} T_{M} \otimes S^{m_0-\ell} N^*} \otimes {\Lambda^{0,i} T_{M}^*}  ) \rightarrow \ker (\Box^{i,\ell}_{m_0+m,M} - E_{m_0+m}^{\ell} I)
\end{equation*}
for $i=0,1$ {where $E^\ell_{m_0,m}$ is given by \eqref{eigenvalue2}.}
Let $\{\widetilde \varphi_k^\ell \}$ be the sequence satisfying \eqref{cases} with respect to the symmetric differential $\varphi_{m_0}^\ell$.
Since $\langle \varphi_{m_0+m}, \widetilde \varphi_{m_0+m}^\ell \rangle = \langle \varphi_{m_0+m}^\ell , \widetilde \varphi_{m_0+m}^\ell \rangle$ holds for each $m$ and $\ell$, 
if we prove that the sequence $\{\widetilde \varphi^\ell_k \}$ equals to $\{ \Pi_{m_0+m, E_{m_0,m}^\ell}^{0,\ell} (\varphi^\ell_{m_0+m}) \}_{m=0}^{\infty}$, then it gives a contradiction.
For this, we will use induction.

Suppose that this claim is true for any $m \leq k-1$. Since $f-\Phi(f)$ is also holomorphic, by Lemma ~\ref{symm diff from holo} we have
$$
\bar \partial_{M} (\varphi_{m_0+k}^\ell - \widetilde \varphi^\ell_{m_0+k}) = -(m_0+k-1) \mathcal{R}_{G} (\varphi^\ell_{m_0+k-1} - \widetilde \varphi^\ell_{m_0+k-1}).
$$
If we prove 
\begin{equation}\label{final1}
\Pi^{1,\ell}_{m_0+k, E^\ell_{m_0,k}} \bar \partial (\varphi_{m_0+k}^\ell - \widetilde \varphi^\ell_{m_0+k}) = \bar \partial \big( \Pi^{0,\ell}_{m_0+k, E_{m_0,k}^\ell} ( \varphi^\ell_{m_0+k} - \widetilde \varphi^\ell_{m_0+k} ) \big)
\end{equation}
and
\begin{equation}\label{final2}
\Pi^{1,\ell}_{m_0+k, E_{m_0,k}^\ell} \mathcal R_{G} (\varphi^\ell_{m_0+k-1} - \widetilde \varphi^\ell_{m_0+k-1}) = \mathcal R_{G} (\Pi^{0,\ell}_{m_0+k-1, E^\ell_{m_0,k-1}}(\varphi^\ell_{m_0+k-1} - \widetilde \varphi^\ell_{m_0+k-1}) ),
\end{equation}
then by $\ker \bar \partial \perp \ker(\Box^{0,\ell}_{m_0+k} - E_{m_0,k}^\ell I)$, it follows that $\widetilde \varphi_{m_0+k}^\ell = \Pi_{m_0+k, E_{m_0,k}^\ell}^{0,\ell} (\varphi_{m_0+k}^\ell)$ and therefore the claim is proved. Since \eqref{final1} follows by a straightforward computation and \eqref{final2} follows by Corollary~\ref{eigenvalue} and the assumption, the proof is completed.
\end{proof}

\begin{proof}[Proof of Theorem 1.1]
To show that $\Phi$ is injective, since 
$\Phi(H^0(M, {S^\ell T^*_{M}\otimes S^{m-\ell}N^*}))$ 
are orthogonal to each other if $m$ or $\ell$ are different by Lemma~\ref{existence}, we only need to consider when $\psi_1, \psi_2$ belong to $H^0(M,\imath^* (S^m T_{\Sigma}^*))$ such that $\psi_1 \not = \psi_2$. 
However in this case $\Phi(\psi_1)$ and $\Phi(\psi_2)$ are different by the construction \eqref{f}.
By Proposition~\ref{density} and {Lemma~\ref{holo}}, the proof is completed.
\end{proof}
\bigskip

\noindent
{\bf \Large Declarations}
\bigskip

\noindent
{\bf Conflict of interest} The authors declare that they have no conflict of interest.


\begin{thebibliography}{XXX}

\bibitem[A18]{Adachi_hyperconvex}
Adachi, Masanori
{\it On a hyperconvex manifold without non-constant bounded holomorphic functions}. Geometric complex analysis, 1--10, Springer Proc. Math. Stat., 246, Springer, Singapore, 2018.

\bibitem[A21]{Adachi}
Adachi, Masanori
{\it On weighted Bergman spaces of a domain with Levi-flat boundary}. Trans. Amer. Math. Soc. 374 (2021), no. 10, 7499--7524.

\bibitem[B85]{Bland}
Bland, John S.
{\it On the existence of bounded holomorphic functions on complete K\"ahler manifolds}. Invent. Math. 81 (1985), no. 3, 555--566.

\bibitem[BDIP02]{Demailly}
 Bertin, Jos\'e; Demailly, Jean-Pierre; Illusie, Luc; Peters, Chris {\it Introduction to Hodge theory}. Translated from the 1996 French original by James Lewis and Peters. SMF/AMS Texts and Monographs, 8. American Mathematical Society, Providence, RI; Soci\'et\'e Math\'ematique de France, Paris, 2002. x+232 pp.

\bibitem[C88]{Corlette}
Corlette, Kevin
{\it Flat $G$-bundles with canonical metrics}. J. Differential Geom. 28 (1988), no. 3, 361--382.


\bibitem[D12]{DemLe}
Demailly, Jean-Pierre
{\it Complex analytic and algebraic geometry,} Available at  https://www-fourier.ujf-grenoble.fr/$\sim$demailly/documents.html,
Version of June 21, 2012.


\bibitem[DF20]{Deng_Fornaess}
Deng, Fusheng; Forn\ae ss, John Erik
{\it Flat bundles over some compact complex manifolds}. J. Geom. Anal. 30 (2020), no. 4, 3484--3497.

\bibitem[DO85]{Diederich_Ohsawa}
Diederich, Klas; Ohsawa, Takeo
{\it Harmonic mappings and disc bundles over compact K\"ahler manifolds}. Publ. Res. Inst. Math. Sci. 21 (1985), no. 4, 819--833.

\bibitem[GW79]{Greene_Wu}
Greene, R. E.; Wu, H.
{\it Function theory on manifolds which possess a pole}.
Lecture Notes in Mathematics, 699. Springer, Berlin, 1979. ii+215 pp. 

\bibitem[LS]{Lee_Seo}
Lee, Seungjae; Seo, Aeryeong
{\it Symmetric differentials and Jets extension of $L^2$ holomorphic functions}, to appear in Indiana University Mathematics Journal.

\bibitem[S22]{Seo}
Seo, Aeryeong
{\it Weakly 1-completeness of holomorphic fiber bundles over compact K\"ahler manifolds},  J. Lond. Math. Soc. (2) 106 (2022), no. 3, 2305--2341. 

\bibitem[SY77]{Siu_Yau}
Siu, Yum Tong; Yau, Shing Tung
{\it Complete K\"ahler manifolds with nonpositive curvature of faster than quadratic decay}. Ann. of Math. (2) 105 (1977), no. 2, 225--264.

\bibitem[ZZ08]{ZZ08}
 Zhao, Ruhan; Zhu, Kehe {\it Theory of Bergman spaces in the unit ball of $\mathbb C^n$}. M\'em. Soc. Math. Fr. (N.S.) No. 115 (2008), vi+103 pp. (2009).
 
\bibitem[Z05]{Zhu}
Zhu, Kehe
{\it Spaces of holomorphic functions in the unit ball}.
Graduate Texts in Mathematics, 226. Springer-Verlag, New York, 2005. x+271 pp. 


\end{thebibliography}
\end{document}